\DeclareMathOperator{\End}{End}
\DeclareMathOperator{\lspan}{span}
\DeclareMathOperator{\Mat}{Mat}
\DeclareMathOperator{\Jac}{Jac}
\DeclareMathOperator{\Prym}{Prym}
\DeclareMathOperator{\HH}{H}
\DeclareMathOperator{\Sp}{Sp}
\DeclareMathOperator{\id}{id}
\DeclareMathOperator{\im}{im}
\DeclareMathOperator{\diag}{diag}
\DeclareMathOperator{\CC}{\mathbb{C}}
\DeclareMathOperator{\QQ}{\mathbb{Q}}
\DeclareMathOperator{\ZZ}{\mathbb{Z}}
\newcommand{\F}{\mathbb{F}}
\newcommand{\PP}{\mathbb{P}}
\newcommand{\Magma}{\textsf{Magma}}
\newcommand{\iu}{\mathrm{i}\mkern1mu}
\newcommand{\thetacha}[3]{\vartheta_{#1}\hspace{-0.5mm}\big[\hspace{-0.5mm}\begin{smallmatrix}
#2\\#3
\end{smallmatrix}\hspace{-0.5mm} \big]}
\theoremstyle{definition}
\newtheorem{definition}[algocf]{Definition}
\newtheorem{example}[algocf]{Example}
\newtheorem{rem}[algocf]{Remark}
\newtheorem{overview}[algocf]{Overview}
\theoremstyle{plain}
\newtheorem{thm}[algocf]{Theorem}
\newtheorem*{thm*}{Theorem}
\newtheorem{lem}[algocf]{Lemma}
\newtheorem{prop}[algocf]{Proposition}
\numberwithin{equation}{section}
\begin{document}
\title[Equations of genus $4$ curves from their theta constants]{Equations of genus $4$ curves from their theta constants}
\begin{abstract}
In this article we give explicit formulas for the equations of a generic genus $4$ curve in terms of its theta constants. The method uses the Prym construction and the beautiful classical geometry around it.
\end{abstract}
\author[Hanselman, Pieper, Schiavone]{Jeroen Hanselman, Andreas Pieper, Sam Schiavone}
\address{
  Jeroen Hanselman,
  RPTU Kaiserslautern-Landau
}
\email{hanselman@mathematik.uni-kl.de}
\address{
  Andreas Pieper,
  Universit\"at Duisburg-Essen
}
\email{andreas.pieper@uni-due.de}
\address{
  Sam Schiavone
}
\email{sam.schiavone@gmail.com}

\maketitle
\section{Introduction}

\par The goal of this article is to provide closed formulas, in the spirit of Aronhold--Weber,  for recovering explicit equations of a generic genus 4 curve $C$, given the values of its algebraic theta constants. The outcome is a quadric $Q$ and a cubic $\Gamma$ in $\mathbb{P}^3$ such that $Q\cap \Gamma$ is equal to the image of the canonical embedding of $C$. Our main result is the following (Theorem \ref{thm:main_result} below).

\begin{thm*} Let $C$ be a generic genus $4$ curve over an algebraically closed field $k$ of characteristic 0 or of characteristic $p$ with $p$ large enough. There are explicit formulas for the equations of $C$ in terms of its theta constants. They use only the following algebraic operations: elementary arithmetic, taking square roots, and solving linear systems of equations.
\end{thm*}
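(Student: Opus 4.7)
The plan is to combine Mumford's algebraic theta theory with the classical Prym--Recillas correspondence in order to descend from the theta-constant data of $C$ to the explicit pair $(Q,\Gamma)$ cutting out its canonical model in $\PP^3$. Since $C$ is generic, the quadric $Q$ is smooth and carries two rulings; these cut out on $C$ the two distinct base-point-free trigonal pencils $|L_1|,|L_2|$ with $L_1\otimes L_2\cong\omega_C$, and the geometric crux of the argument is to recover these two pencils from the theta constants.

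First I would use Mumford's algebraic formalism to pass from the input theta constants to explicit coordinates for the Kummer embedding $\Jac(C)/\{\pm1\}\hookrightarrow\PP^{15}$ by the linear system $|2\Theta|$, together with the action of $\Jac(C)[2]$ and the evaluation of theta functions at arbitrary half-periods. This step is rational in the theta nulls up to the square roots intrinsic to the Mumford normalization, and it makes all later evaluations algorithmic.

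Next I would apply the Prym--Recillas correspondence to extract the two trigonal pencils. Each pencil $|L_i|$ corresponds, via Recillas's construction, to an \'etale double cover $\tilde X_i\to X_i$ of a trigonal genus $5$ curve with $\Prym(\tilde X_i/X_i)\cong\Jac(C)$. The Schottky--Jung proportionalities express the theta constants of each candidate Prym as polynomial-plus-square-root expressions in those of $C$; inverting these identities and resolving the square-root ambiguities isolates the two classes $L_1,L_2\in\Pic^3(C)$. From the theta functions I then compute an explicit basis of $H^0(C,L_i)$ for each $i$ as linear combinations of theta translates. The four products of one section from each pencil form a basis of $H^0(C,\omega_C)$ satisfying a unique Segre-type quadratic relation, which is the quadric $Q$.

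With $Q$ in hand the cubic $\Gamma$ is determined only modulo $Q\cdot H^0(\PP^3,\mathcal{O}(1))$, so it suffices to evaluate the canonical image of $C$ at enough explicit points and to solve the resulting linear system for the coefficients. These points arise from evaluating theta functions at suitable half-period translates of a base point and pushing through the canonical embedding constructed above. The principal obstacle is making the Prym--Recillas correspondence explicit at the level of theta constants: one must choose the correct square roots so that the two rulings are identified on the nose, while keeping all intermediate formulas in the allowed class of elementary arithmetic, square roots, and linear systems. This is where the \emph{beautiful classical geometry} around the two rulings of $Q$ is indispensable.
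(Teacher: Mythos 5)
Your strategy---recover the two trigonal pencils $|L_1|,|L_2|$ cut out by the rulings of $Q$, obtain $Q$ as the Segre relation among products of their sections, then interpolate a cubic at points---is not the route taken here, and it has gaps I do not see how to close within the allowed operations. The central one is the claim that ``inverting'' the Schottky--Jung proportionalities ``isolates the two classes $L_1,L_2\in\Pic^3(C)$.'' The Schottky--Jung relations are indexed by two-torsion points $\eta$ and express the theta constants of the genus $3$ Prym in terms of those of $C$; they carry no information singling out particular degree-$3$ line bundles. The pencils $L_1,L_2$ are exactly the singular points of the theta divisor ($W^1_3\subset \Pic^3(C)$), and locating them is Kempf's method (implemented by Chua--Kummer--Sturmfels), which requires solving a highly non-linear system for a point where a theta function and all its derivatives vanish---precisely what the theorem's restriction to elementary arithmetic, square roots, and linear systems is meant to exclude. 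Your appeal to Recillas is also misplaced: the trigonal construction pairs a base-point-free $g^1_4$ on a genus $4$ curve with an \'etale double cover of a trigonal genus $5$ curve (and a generic genus $4$ curve has a two-dimensional family of $g^1_4$'s, not two distinguished ones); it attaches nothing to the $g^1_3$'s $L_1,L_2$. Finally, ``compute a basis of $H^0(C,L_i)$ as linear combinations of theta translates'' and ``evaluate the canonical image at explicit points'' both presuppose access to the Abel--Jacobi image of $C$ inside its Jacobian, which is the unknown; from theta \emph{constants} one can only evaluate at $0$ and at half-periods.

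The proof actually given avoids all of this by working with odd theta data. One fixes a single $\eta$, computes the bitangents $\ell_i$ of the Prym quartic $X$ via Schottky--Jung and Aronhold--Weber, and computes ten pairs of tritangent planes $H_i,H_i'$ of $C$ differing by $\eta$ from theta derivatives, which the Frobenius--Fay generalized Jacobi derivative identity (Theorem \ref{ThmJacobi}), applied to the azygetic essentially independent characteristics of Lemma \ref{lem:special_characteristics}, reduces to polynomials in theta constants. Milne's bijection (Theorem \ref{thm:Milne}) gives $\varphi(H_iH_i')=\lambda_i\ell_i^2$ for Lehavi's map $\varphi$ on the seven-dimensional space $V_{C,\eta}$; the linear relations among the $H_iH_i'$ determine $(\lambda_i)$ up to scale by a linear system (Lemmas \ref{lem:gen_set}, \ref{lem:proof_example}), hence determine $\varphi$, and $Q$ is recovered as $\ker(\varphi)$. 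The cubic is then obtained not by point interpolation but from the discriminant construction: any right inverse $\widetilde\psi$ of $\varphi$ yields a sextic $\widetilde G$ congruent modulo $Q$ to the square of an equation of the symmetroid $\Gamma_\eta$ (Proposition \ref{prop:cubic_squared}, Lemma \ref{lem: tildeG}), and one extracts a square root mod $Q$. To salvage your outline you would need a genuinely new way to locate $W^1_3$ from theta constants by the allowed operations; as written, the proposal does not establish the theorem.
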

We record the explicit formulas for these equations in \Cref{subsec:main}.

\subsection{Theta constants}
Recall that over $k=\mathbb{C}$ the theta constants of a principally polarized abelian variety (p.p.a.v.) of dimension $g$ with (small) period matrix $\tau\in \mathfrak{H}_g$ are given by the $2^{g-1}(2^g+1)$ numbers $\thetacha{}{\delta}{\varepsilon}(0, \tau)$ with $\delta,\varepsilon\in \frac{1}{2}\!\ZZ^g $ running through a set of representatives of pairs in $\frac{1}{2}\!\ZZ^g\!/\!\ZZ^g$ satisfying $4\delta^t \varepsilon\equiv 0 \mod 2$. Here $\thetacha{}{\delta}{\varepsilon}$ is the Riemann theta function with characteristics, given by
$$
\thetacha{}{\delta}{\varepsilon} (z, \tau)= \sum_{n\in {\ZZ}^g} \exp\left( \pi \iu (n+\delta)^t \tau (n+\delta) +2 \pi \iu (n+\delta)^t(z+\varepsilon)\right)\,.
$$

Mumford \cite{MumfordEqAV1} gave a purely algebraic definition of the theta constants that works over any algebraically closed field $k$ of characteristic $\neq 2$. He also showed that the theta constants determine the principally polarized abelian variety uniquely. 

It is then natural to ask if one can express the equations of a curve $C$ in terms of the theta constants of its Jacobian. This problem was classically studied by Rosenhain (genus $2$) and Aronhold--Weber (plane quartics). Later Takase \cite{Takase} generalized Rosenhain’s work to arbitrary hyperelliptic curves. In all these cases they exploit the fact that the moduli space of these curves equipped with a full level $2$ structure is rational. In the hyperelliptic case this rationality follows from the existence of Weierstrass equations, and for plane quartics the formulas of Aronhold (see \Cref{subsec:aronholdweber}) give an explicit rational parametrization.
\par In genus 4 we are faced with a different situation, as the moduli space of genus $4$ curves with a full level $2$ structure seems to not be unirational. (This will be shown in future work.) This fact may explain why the general problem has remained open, only garnering results in special cases. For instance, Schottky \cite{SchottkySpecial} managed to solve the problem in the case where one of the theta constants vanishes, which is equivalent to the canonical quadric being a cone.
\par Another phenomenon not appearing in the theory of plane quartics is that not every p.p.a.v.~is a Jacobian when $g\geqslant 4$. In fact, a $4$-dimensional (indecomposable) p.p.a.v. is the Jacobian of a smooth curve if and only if the Schottky modular form vanishes \cite[Chapter 5]{FarkasRauch}\cite{IgusaSchottky}\cite{Freitag}.
\par Recall from \cite[Chapter 5]{FarkasRauch} that the Schottky modular form is related to the Prym construction as follows. Given an \'etale double cover $\tilde{C} \to C$, the Prym variety $\Prym(\tilde{C}/C)=\ker(\text{Nm}:\Jac(\tilde{C}) \rightarrow \Jac(C))^\circ$ is in a natural way a principally polarized abelian variety. In particular, the theta constants of $\Prym(\tilde{C}/C)$ must satisfy the quartic Riemann theta formulas. The Schottky modular form is derived from these identities by expressing the theta constants of $\Prym(\tilde{C}/C)$ in terms of the theta constants of $\Jac(C)$.
\par This suggests that the Prym construction could play a role in the problem of reconstructing the curve from the theta constants, and this is the approach we pursue here. Furthermore, we will use a beautiful classical geometric construction concerning the Prym due to Caporali, Wirtinger, P. Roth (see \cite{coble} for a classical textbook reference), and W. P. Milne \cite{Milne}. The modern development was pioneered by Recillas \cite{RecillasPhD} and henceforth it is commonly referred to as Recillas's trigonal construction. For further references, see \cite{Catanese}\cite{DonagiTetrag}\cite{Lehavi}\cite{BruinSertoz}.

\subsection{Motivation and applications}
In addition to the intrinsic value of recovering a genus 4 curve from its theta constants, our formulas also have several applications. It is interesting in arithmetic and complex geometry to find higher genus curves with special properties, such as endomorphism rings, Hodge tensors, and Galois representations. Being able to reconstruct a curve from the theta constants (or period matrix) of its Jacobian allows us to construct curves having these properties. In the present article we will discuss two examples: one curve whose Jacobian is isogenous to a product, and one curve with a modular Jacobian (see \Cref{sec:examples}). In the recent preprint \cite{BHPS} the authors together with Bouchet found two special families of genus 4 curves of Mumford type using our formulas in combination with the invariant theoretic techniques of \cite{ThomasInvariants, ThomasReconstructing}. These provide the first examples of Shimura families not of PEL-type.

\subsection{Previous work}
The task of recovering a curve from its Jacobian is an important aspect of the Schottky problem, and has seen three main approaches (see \cite{Grushevsky} for an overview):
\begin{enumerate}[(i)]
    \item The various methods based on Fay's trisecant identity \cite[Formula (45)]{Faytrisec}\cite[Section IIIb]{TataII};
    \item The Andreotti--Mayer approach, using singularities of the $\Theta$-divisor; and
    \item The Schottky--Jung approach, using Prym varieties.
\end{enumerate}
The first approach has been the most fruitful so far, with the most significant landmarks being the characterization of Jacobians via the KP-equations \cite{Shiota}, or via the existence of a trisecant of the Kummer variety \cite{Krichever}. Furthermore, in the context of the KP-equations, Dubrovin \cite{Dubrovin} presented a method for recovering a curve from its Jacobian using only theta functions. Agostini, \c{C}elik, and Eken \cite{agostini} have implemented this to numerically reconstruct a curve of arbitrary genus from the period matrix of their Jacobian. They compute theta derivatives up to order $4$ to produce degree $4$ polynomials vanishing on the curve. In genus $4$ they require Gr\"obner basis computations to write the equations as the intersection of a quadric and a cubic.
\par The Andreotti--Mayer approach to the Schottky problem works very well in genus four as Jacobians are characterized by the $\Theta$-divisor being singular. In the non-hyperelliptic case this singular point is unique and Kempf \cite{Kempf} describes a method to reconstruct the curve from the tangent cone of this singularity. This was implemented in the work of Chua, Kummer, and Sturmfels \cite{Chua_2018}. In order to find this singular point they need to solve a non-linear system for computing the point where both a theta function and all its derivatives vanish.

The present paper pursues the Schottky--Jung approach and is strongly influenced by the ideas of Lehavi \cite{Lehavi}. He makes a result of Carporaso--Sernesi effective in genus $4$, who have proven that a general curve of genus $g$ can be recovered from its odd theta hyperplanes \cite{CaporasoSernesi}.

Lehavi's method is based on the cartesian diagrams
\begin{equation} 
\begin{tikzcd}
    S^2 \HH^0(\Omega_C(\eta))\times_{\HH^0(\Omega_C^{\otimes 2})} S^2 \HH^0(\Omega_C)\arrow{r}\arrow[hookrightarrow]{d}\arrow{r}{\varphi } & S^2 \HH^0(\Omega_C(\eta))\arrow[hookrightarrow]{d}\\
     S^2 \HH^0(\Omega_C)\arrow{r} & \HH^0(\Omega_C^{\otimes 2}) 
\end{tikzcd}
\end{equation}
where $\eta$ runs through all the 255 non-trivial two-torsion points in $\Jac(C)[2]~\setminus~\{0\}$. The quadric containing $C$ is recovered from these 255 cartesian squares using only linear algebra. However, to recover the cubic, one must find the locus where a certain  $3\times 4$-matrix with linear entries has rank one. Lehavi's paper contains an error, and if one tries to fix it, one is required to solve this non-linear system for all the 255 non-zero two torsion points \cite[\S 5.2]{CelikKulkarni}. Consequently, Lehavi's results, despite being geometrically elegant, do not immediately provide an efficient algebraic way of recovering the curve.

The present paper improves on this method in several respects: we use only a single two-torsion point $\eta$, and we require no non-linear algebra to recover the cubic. Furthermore, we derive the first closed formula for the equations of $C$ in terms of the theta constants. In contrast to the papers mentioned above, we only use basic algebraic operations, namely elementary arithmetic, extraction of square roots, and solving linear systems of equations to compute equations for the canonical model of the curve.

The formula can be used to perform computations over complex fields with high precision. For example by computing the theta constants by first employing the duplication formula to reduce to calculating the $2^g=16$ fundamental theta constants $\thetacha{}{\delta}{0}$, and then using the work of Elkies and Kieffer \cite{ElkiesKieffer}. Consequently, it provides access to many examples arising in number theory that were previously beyond the reach of existing methods.

\subsection{Structure of the article} 
The article is organized as follows. In Section \ref{sec:prelims} we begin by defining the objects that will play key roles in our construction. In Section \ref{sec:main} we describe our main result: formulas for the equations of a genus 4 curve in terms of the theta constants of its Jacobian. In Section \ref{subsec:main} we summarize and explicitly write down all the formulas and equations involved. In Section \ref{sec:examples} we present an explicit computation over a finite field, and two examples as an application of our method. Finally, in Section \ref{sec:future} we briefly discuss how our method can be generalized to higher genus.

\subsection{Acknowledgements}
The authors would like to thank Thomas Bouchet, Nils Bruin, Edgar Costa, Igor Dolgachev, Noam Elkies, Gavril Farkas, Sam Grushevsky, Avi Kulkarni,  Elisa Lorenzo Garc\'{i}a, David Lubicz, Martin Raum, and Christophe Ritzenthaler for helpful conversations during the writing of this article. Furthermore, we thank Brendan Hassett, David Lehavi, Bjorn Poonen, and Jeroen Sijsling for reading a previous version of the manuscript and giving suggestions to improve the article.
\par The first author is supported by MaRDI, funded by the Deutsche Forschungsgemeinschaft (DFG), project number 460135501, NFDI 29/1.
The third author was supported by the Simons Collaboration in Arithmetic Geometry, Number Theory, and Computation via Simons Foundation grant 550033.
\section{Preliminaries} \label{sec:prelims}

\subsection{The Aronhold--Weber formulas in genus $3$} \label{subsec:aronholdweber}
We recall the classical Aronhold--Weber formulas. Let $X/\mathbb{C}$ be a smooth, non-hyperelliptic genus $3$ curve canonically embedded in $\PP^2$, and let $\mu_1, \ldots, \mu_7$ be an Aronhold set of bitangents of $X$. By an appropriate choice of projective coordinates $y_0, y_1, y_2$, one can take
\begin{equation*}
    \mu_1: y_0 = 0,\qquad \mu_2: y_1 = 0,\qquad \mu_3 : y_2 = 0,\qquad \mu_4: y_0 + y_1 + y_2 = 0 \, .
\end{equation*}
For the remaining bitangents, write
\begin{equation} \label{eqn:bitangent_coeffs}
    \mu_{4+i}: a_{i0} y_0 + a_{i1} y_1 + a_{i2} y_2 = 0
\end{equation}
for some $a_{i0}, a_{i1}, a_{i2} \in \CC$, where $i \in \{1,2,3\}$.

One can express the coefficients $a_{ij}$ of the bitangents in \Cref{eqn:bitangent_coeffs} in terms of theta constants. For ease of notation, we identify a theta characteristic
$\big[\hspace{-0.5mm}\begin{smallmatrix}
\delta\\\varepsilon
\end{smallmatrix}\hspace{-0.5mm} \big]$
with the integer $i \in \{0, \ldots, 2^{6}-1\}$ given by
$$
i = 2(\varepsilon_2 + 2\varepsilon_1 + 4\varepsilon_0) + 2^{4}(\delta_2 + 2 \delta_1 + 4\delta_0)\, ,
$$
and write $\vartheta_i \colonequals \thetacha{}{\delta}{\varepsilon}$ for the corresponding theta function with characteristic. Using this notation, one can write \cite[Theorem 2]{Fiorentino} in the following form.

\begin{thm}\label{thm:Fiorentino}
    With notation as above, the coefficients in \Cref{eqn:bitangent_coeffs} are given by
\begin{equation}
 \begin{aligned}
     a_{10} & :=\iu \frac{\vartheta_{33} \vartheta_{5}}{\vartheta_{40} \vartheta_{12}}, &
      a_{11} & :=\iu \frac{\vartheta_{21} \vartheta_{49}}{\vartheta_{28} \vartheta_{56}}, &
      a_{12} & :=\iu \frac{\vartheta_{7} \vartheta_{35}}{\vartheta_{14} \vartheta_{42}}, \\
    a_{20} & :=\iu \frac{\vartheta_{5} \vartheta_{54}}{\vartheta_{27} \vartheta_{40}}, &
      a_{21} & :=\iu \frac{\vartheta_{49} \vartheta_{2}}{\vartheta_{47} \vartheta_{28}}, &
      a_{22} & :=\iu \frac{\vartheta_{35} \vartheta_{16}}{\vartheta_{61} \vartheta_{14}}, \\
    a_{30} & :=-\frac{\vartheta_{54} \vartheta_{33}}{\vartheta_{12} \vartheta_{27}}, &
      a_{31} & :=\frac{\vartheta_{2} \vartheta_{21}}{\vartheta_{56} \vartheta_{47}}, &
      a_{32} &:=\frac{\vartheta_{16} \vartheta_{7}}{\vartheta_{42} \vartheta_{61}}.\\
    \end{aligned}
\end{equation}
\end{thm}
Next, following \cite[Theorem 6.1.7]{Dolgachev} there are linear forms $u_0, u_1, u_2$  determined by the system of linear equations
\begin{equation}\label{eq:aronholdweber}
\begin{aligned}
    &u_0 + u_1 + u_2 + y_0 + y_1 + y_2 = 0,\\
&\frac{u_0}{a_{10}}+ \frac{u_1}{a_{11}}+ \frac{u_2}{a_{12}}
+ k_1a_{10}y_0 + k_1a_{11}y_1 + k_1 a_{12}y_2 = 0,\\
&\frac{u_0}{a_{20}}+ \frac{u_1}{a_{21}}+ \frac{u_2}{a_{22}}
+ k_2a_{20}y_0 + k_2a_{21}y_1 + k_2 a_{22}y_2 = 0,\\
&\frac{u_0}{a_{30}}+ \frac{u_1}{a_{31}}+ \frac{u_2}{a_{32}}
+ k_3a_{30}y_0 + k_3a_{31}y_1 + k_3 a_{32}y_2 = 0.\,\\
\end{aligned}
\end{equation}
where the $y_i$ are variables and the $k_i$ can be found using a certain system of equations described in the same theorem. As shown in \cite[Corollary 2]{Fiorentino}, the normalization factors $k_i$ automatically become $1$ for our choice of the Weber moduli $a_{ij}$.

With the $u_i$ one can write down an equation for $X$ as follows:
\begin{thm}[{\cite[Theorem 6.1.9]{Dolgachev}, \cite[Proposition 2]{Fiorentino}}]
    $X$ is cut out by the equation:
    \begin{equation*}
        4 y_0 u_0 y_1 u_1 = (y_0 u_0 + y_1 u_1 + y_2 u_2)^2 \, .
    \end{equation*}
\end{thm}
Moreover, Aronhold--Weber give the equations of all the $28$ bitangents of $X$. We will print these formulas together with the odd theta characteristics corresponding to the bitangents. To this end, we use  the following odd theta charactistics as an Aronhold system (see \cite{Fiorentino}):
\begin{align*}
\alpha_1 = \frac{1}{2}\begin{bmatrix}
    1 &1 &1 \\
1 & 1& 1
\end{bmatrix},
\alpha_2 = \frac{1}{2}\begin{bmatrix}
    0 & 0 & 1\\
0 & 1 & 1
\end{bmatrix},\alpha_3 = \frac{1}{2}\begin{bmatrix}    
0 & 1& 1\\
0&0&1
\end{bmatrix}, \alpha_4 = \frac{1}{2}\begin{bmatrix}    
1 & 0& 1\\
1 &0&0
\end{bmatrix},\\
\alpha_5 = \frac{1}{2}\begin{bmatrix}    
1 & 0& 0\\
1 & 0& 1
\end{bmatrix}, \alpha_6 = \frac{1}{2}\begin{bmatrix}    
1 &1&0\\
0 & 1&0
\end{bmatrix}
, \alpha_7 =\frac{1}{2}\begin{bmatrix}    
0 & 1& 0\\
1 & 1& 0
\end{bmatrix}.
\end{align*}
Then, by the theory of Aronhold systems \cite{GrossHarris}, the $28=7+\binom{7}{2}$ odd theta characteristics are $\alpha_i$ and $\alpha_{ij} := \sum_{\nu=1}^7\alpha_\nu-\alpha_i-\alpha_j,\, i,j =1, \ldots 7, i\neq j$.

The equations of the 28 bitangents are now given as follows \cite[Theorem 6.1.9]{Dolgachev}:
\begin{small}
\begin{equation}\label{eq:explicitbitangents}
\begin{aligned}   
&\hspace{-20mm} y_0 = 0\quad (\alpha_1),\quad y_1 = 0 \quad (\alpha_2),\quad y_2 = 0 \quad (\alpha_3),\\
&\hspace{-20mm}y_0 + y_1 +y_2 = 0 \quad (\alpha_4),\\
&\hspace{-20mm}a_{i0}y_0+a_{i1}y_1+a_{i2}y_2 = 0 \quad (\alpha_{4+i}), \, i = 1,2,3,\\
&\hspace{-20mm}u_0 = 0\quad(\alpha_{23}),\quad u_1 = 0\quad(\alpha_{13}),\quad  u_2 = 0\quad(\alpha_{12}),\\\\
  &\hspace{-20mm} y_0+y_1+u_2\quad(\alpha_{34}), \quad  y_0+y_2+u_1\quad(\alpha_{24}),\quad y_1 +y_2 + u_0\quad(\alpha_{14}),\\
  &\hspace{-20mm}\frac{u_0}{a_{i0}} + k_i (a_{i1}y_1 + a_{i2}y_2)\quad(\alpha_{1(4+i)}), i = 1,2,3 \\
  &\hspace{-20mm}\frac{u_1}{a_{i1}} + k_i (a_{i0}y_0 + a_{i2}y_2)\quad(\alpha_{2(4+i)}), \, i = 1,2,3 \\
  &\hspace{-20mm}\frac{u_2}{a_{i2}} + k_i (a_{i0}y_0 + a_{i1}y_1)\quad(\alpha_{3(4+i)}), \, i = 1,2,3\\\\
  &\hspace{-20mm}\frac{u_0}{1-k_i a_{i1}a_{i2}} + \frac{u_1}{1-k_i a_{i0}a_{i2}} +\frac{u_2}{1-k_i a_{i0}a_{i1}} \quad (\alpha_{67}, \alpha_{57}, \alpha_{56}), \, i = 1,2,3 \\
\end{aligned}
\end{equation}
\[\frac{u_0}{a_{i0}(1-k_i a_{i1}a_{i2})} + \frac{u_1}{a_{i1}(1-k_i a_{i0}a_{i2})} +\frac{u_2}{a_{i2}(1-k_i a_{i0}a_{i1})}
\quad (\alpha_{45}, \alpha_{46}, \alpha_{47}), \, i = 1,2,3.\]
\end{small}
\begin{rem}
The formulas in \cite[Theorem 6.1.9]{Dolgachev} contain typos that will be corrected in a future edition. We thank Igor Dolgachev for providing us with these corrections.
\end{rem}
\subsection{The Prym canonical map}
This section summarizes results from the articles \cite{Catanese}\cite{BruinSertoz}. Let $C$ be a non-hyperelliptic genus $4$ curve, $\eta\in \Jac(C)[2]\setminus \{0\}$ be a non-trivial 2-torsion point. The Prym canonical map is the map
$$
\phi_\eta: C \longrightarrow \mathbb{P}(\HH^0(\Omega_C(\eta)))\cong \mathbb{P}^2
$$
associated to the linear system $|\Omega_C (\eta)|$. The map $\phi_\eta$ is either a ramified degree $2$ map onto a smooth cubic (the bielliptic case) or a birational map onto a singular plane sextic. (For details, see \cite[\S1]{Catanese}.)
We will discuss the bielliptic case in Section \ref{sec:bielliptic} below. Thus assume now we are in the non-bielliptic case and hence the Prym canonical curve $\im(\phi_\eta)$ is a singular plane sextic embedded in $\mathbb{P}^2$. Let $b_1:S\rightarrow \mathbb{P}^2$ be the repeated blow-up of $\mathbb{P}^2$ in the singular points of the Prym canonical curve. Denote by $E_1, \ldots, E_k$ the total transform of the exceptional divisors of the successive blow-ups. On $S$ we can consider the line bundle $\mathcal{L} = b_1^*\mathcal{O}_{\PP^2}(3)\otimes \mathcal{O}(-\sum_{j=1}^k (r_j-1)E_j)$ where $r_j$ is the multiplicity of the singular point blown up in the $j$th step. The line bundle $\mathcal{L}$ induces a map $b_2: S\rightarrow \mathbb{P}(\HH^0(\Omega_C))\cong \mathbb{P}^3$ whose image, which we denote by $\Gamma_\eta$, is a cubic symmetroid \cite[p.~37]{Catanese}. We recall the definition of a cubic symmetroid.
\begin{definition}\label{def:symmetroid}
A \textit{cubic symmetroid} is a cubic surface $V$ which is the vanishing scheme of the determinant of a symmetric $3\times 3$ matrix of linear forms. That is, there exists a symmetric $3 \times 3$ matrix $A$ with entries $a_{ij}\in k[x_0, \ldots, x_3]$ homogeneous of degree $1$ such that $V = \mathcal{V}(\det(A))$.
\end{definition}
We define a rational map $c: \mathbb{P}^2\dashrightarrow \Gamma_\eta$ as the composition of $b_1^{-1}$ with the map $b_2:S \rightarrow \Gamma_{\eta}$.
The situation is summarized by the following commutative diagram.
\begin{equation}
\label{eqn:cayley_cubic}
\begin{tikzcd}
& & S\arrow{dr}{b_2}\arrow{dl}[swap]{b_1}\\
C \ar[hook]{r}{\phi_\eta}& \mathbb{P}^2 \arrow[rr, dashrightarrow, "c"] &&\Gamma_\eta \subseteq \mathbb{P}^3
\end{tikzcd}
\end{equation}
The map $b_2: S\rightarrow \Gamma_\eta$ is a blow-up; this follows from the observations in \cite[p. 37]{Catanese}. Furthermore, the composition $c \circ \phi_\eta: C \to \PP^3$ is equal to the canonical embedding and hence independent of $\eta$. This will be shown in the proof of Lemma \ref{lem: psirightinv} below.
\par We will describe the situation for a generic $C$. In this case, the Prym canonical image $\im(\phi_\eta)$ has six nodes that are the pairwise intersection of four lines, say $n_1, \ldots, n_4$. The surface $S$ is the blow-up in these six nodes. The strict transforms of the $n_i$ under $b_1$ are $(-2)$-curves. These are contracted by $b_2$ to $A_1$ singularities on $\Gamma_\eta$.
\par There are two possible degenerate cases. First, it could happen that two of the lines $n_i$ coincide. In this case, the two corresponding $A_1$-singularities combine to form a single $A_3$-singularity. Then $\Gamma_\eta$ has three singular points: Two of type $A_1$ and one of type $A_3$.
\par Second, three of the lines $n_i$ could coincide. In that case, $\Gamma_\eta$ has one singularity of type $A_1$ and one of type $A_5$. For more details, see \cite[p. 33]{Catanese}.
\par The main consequence of Catanese's construction is the following theorem that he attributes to Wirtinger--Coble--Recillas.
\begin{thm} The map $\eta \mapsto \Gamma_\eta$ gives a bijection between:
\begin{itemize}
    \item $\Jac(C)[2]\setminus \{0\}$.
    \item Irreducible cubic symmetroids containing the canonical image of $C$ in $\mathbb{P}^3$.
\end{itemize}
\end{thm}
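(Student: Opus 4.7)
The plan is to prove bijectivity by constructing an explicit inverse to the map $\eta \mapsto \Gamma_\eta$. The forward direction is essentially already set up by the preceding paragraphs: $\Gamma_\eta$ is irreducible because it is the birational image of the irreducible surface $S$ under $b_2$, it is a cubic symmetroid by Catanese's analysis \cite[p.~37]{Catanese}, and it contains the canonical image of $C$ because $c\circ \phi_\eta$ is the canonical embedding (cf. Lemma \ref{lem: psirightinv}).

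For the inverse construction I would exploit the intrinsic geometry of any cubic symmetroid. Writing $\Gamma=\mathcal{V}(\det A)$ for a symmetric $3\times 3$ matrix $A$ of linear forms on $\mathbb{P}^3$, there is a canonical rational map $\pi:\Gamma \dashrightarrow \mathbb{P}^2$ defined on the rank-$2$ locus by $p\mapsto [\ker A(p)]$; this map is birational and its indeterminacy is resolved on the minimal desingularization, producing a model for the blow-up $b_2:S\to \Gamma$. Given the canonical image $C\subset \Gamma$, restriction yields a morphism $\pi|_C: C \to \mathbb{P}^2$ whose image is a plane sextic (or a smooth cubic in the bielliptic case treated separately).

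The heart of the proof is then to identify $L=(\pi|_C)^*\mathcal{O}_{\mathbb{P}^2}(1)$ with $\Omega_C\otimes\eta$ for a unique non-trivial 2-torsion class $\eta\in \Jac(C)[2]$. The degree computation $\deg L=6=2g-2$ is immediate from the degree of the sextic image. The crucial 2-torsion property comes from the symmetry of $A$: the rank filtration on $\Gamma$ produces a natural double cover of the desingularization of $\Gamma$ (the two sheets parametrizing the two rulings of the associated conic bundle), and pulling this back along $C\hookrightarrow \Gamma$ gives an \'etale double cover of $C$, which by duality corresponds to a 2-torsion element $\eta\in \Jac(C)[2]\setminus\{0\}$. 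Once $\eta$ is identified, comparison with Catanese's construction shows $\pi|_C=\phi_\eta$ up to $\mathrm{PGL}_3$, whence $\Gamma=\Gamma_\eta$; this simultaneously yields injectivity (applied to $\Gamma_\eta$) and surjectivity (applied to an arbitrary irreducible cubic symmetroid through $C$).

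The most delicate step is the 2-torsion identification above. A priori, restricting a ruling projection from an arbitrary nodal cubic surface to a curve on it only yields some degree-$6$ line bundle, with no reason for $L\otimes \Omega_C^{-1}$ to be 2-torsion. It is precisely the \emph{symmetry} of the defining matrix $A$ (i.e.\ the symmetroid condition, as opposed to merely the existence of a $3\times 3$ determinantal representation) that forces the associated conic-bundle cover to be unramified over $C$, producing a genuine 2-torsion class rather than a higher-order torsion or non-torsion line bundle. Carefully tracking this symmetry through the geometry of $S$ and verifying that the two constructions are mutual inverses constitutes the main technical content of the proof.
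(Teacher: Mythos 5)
The paper does not actually prove this theorem: its ``proof'' is the single line ``See \cite[Theorem 1.5]{Catanese}'', the result being attributed to Wirtinger--Coble--Recillas. So your sketch cannot be compared with an argument in the paper itself; the right comparison is with the cited source, and there your outline is essentially a faithful reconstruction of the classical argument. The forward direction is exactly as you say (irreducibility from $b_2:S\to\Gamma_\eta$ being birational onto its image, the symmetroid property and the containment of the canonical curve from Catanese's construction), and the inverse is indeed built from the rank stratification of a symmetric determinantal representation: the kernel map $p\mapsto[\ker A(p)]$ on the rank-$2$ locus is the inverse of $c$, and the $2$-torsion class is extracted from the extra self-duality that the \emph{symmetry} of $A$ imposes, which is precisely the point you correctly isolate as the heart of the matter.

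Two steps in your outline are thinner than they should be. First, the cleanest route to $L=(\pi|_C)^*\mathcal{O}_{\mathbb{P}^2}(1)\cong\Omega_C\otimes\eta$ with $\eta^{\otimes 2}$ trivial is via the adjugate rather than the ruling cover: on the rank-$2$ locus one has $\mathrm{adj}(A)=v\cdot v^{t}$ up to scalar \emph{because} $A$ is symmetric, and since the entries of $\mathrm{adj}(A)$ are quadrics this gives $L^{\otimes 2}\cong\mathcal{O}_{\mathbb{P}^3}(2)|_C\cong\Omega_C^{\otimes 2}$ directly; if you insist on the double cover by lines of the singular conics, you must at least record that it is \'etale over $C$ only because $C$ avoids the rank-$\leq 1$ locus, i.e.\ the singular points of $\Gamma$ (a fact the paper needs and proves separately elsewhere). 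Second, and more importantly, you never rule out $\eta=0$: a priori the cover restricted to $C$ could be disconnected, equivalently $h^0(L)$ could be $4$ rather than $3$, and nothing in your sketch excludes this. This non-triviality is genuinely part of the content of Catanese's theorem (it is tied to the nodes of $\Gamma$ forming an even set and the determinantal representation being non-split), so as written your proposal has a gap there even though the overall architecture is the right one.
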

\begin{proof}
    See \cite[Theorem 1.5]{Catanese}.
\end{proof}
\subsection{Two linear maps} \label{subsec:2linear}
 We will now define a linear map $\varphi$ that maps certain quadratic forms on the right $\mathbb{P}^3$ to quadratic forms on the left $\mathbb{P}^2$ in Diagram (\ref{eqn:cayley_cubic}). We begin by considering the following cartesian diagram. 
\begin{equation} \label{eqn:phi_diagram}
\begin{tikzcd}
    S^2 \HH^0(\Omega_C(\eta))\times_{\HH^0(\Omega_C^{\otimes 2})} S^2 \HH^0(\Omega_C)\arrow{r}\arrow[hookrightarrow]{d}\arrow{r}{} & S^2 \HH^0(\Omega_C(\eta))\arrow[hookrightarrow]{d}\\
     S^2 \HH^0(\Omega_C)\arrow{r} & \HH^0(\Omega_C^{\otimes 2})    
\end{tikzcd}
\end{equation}
where the maps $S^2\HH^0(\Omega_C (\eta)) \rightarrow \HH^0(\Omega_C^{\otimes 2}),\, S^2\HH^0(\Omega_C) \rightarrow \HH^0(\Omega_C^{\otimes 2})$ are given by multiplication. The injectivity of the right vertical arrow expresses the fact that there are no quadrics vanishing on $\im(\phi_\eta)$.
\begin{definition}
Following Lehavi \cite[p. 2]{Lehavi} we define $V_{C, \eta}\subset S^2 \HH^0(\Omega_C)$ as the image of the embedding
$$S^2 \HH^0(\Omega_C (\eta))\times_{\HH^0(\Omega_C^{\otimes 2})} S^2 \HH^0(\Omega_C) \hookrightarrow S^2 \HH^0( \Omega_C).$$
Identifying $S^2 \HH^0(\Omega_C (\eta))\times_{\HH^0(\Omega_C^{\otimes 2})} S^2 \HH^0(\Omega_C)$ with its image $V_{C,\eta}$, we define Lehavi's map $\varphi$ to be the projection map onto the first factor
$$\varphi: V_{C, \eta} \rightarrow S^2 \HH^0(\Omega_C (\eta))\,.$$
\end{definition}
From the facts that $\dim (S^2 \HH^0(\Omega_C))=10,\, \dim (S^2 \HH^0(\Omega_C(\eta)))=6$ and
$\dim (\HH^0(\Omega_C^{\otimes 2}))=9$, and that the bottom map $S^2 H^0(\Omega_C) \to H^0(\Omega_C^{\otimes 2})$ is surjective, then $\dim(V_{C, \eta})=10+6-9=7$. Furthermore, it will later turn out to be useful that $\ker(\varphi)$ is one-dimensional and generated by the quadratic form $Q$ vanishing on $C$.
\par Next, we define a right inverse $\psi$ for $\varphi$ using the following lemma.
\begin{lem}\label{lem: O2iso}
Assume that $\Gamma_\eta$ is generic. Let us denote by $N_i \subset S,\, i=1, \ldots, 4$ the exceptional divisors of $b_2$. Then
$b_1^*\mathcal{O}(2)\cong b_2^* \mathcal{O}(2)\otimes \mathcal{O}_S(-\sum N_i)$.
If $\Gamma_\eta$ is not generic, then the $N_i$ have to be taken with appropriate multiplicities.
\end{lem}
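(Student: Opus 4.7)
The plan is to translate the claimed isomorphism into a divisor-class identity on $S$, identify the exceptional divisors $N_i$ of $b_2$ with the strict transforms of the four lines $n_1, \ldots, n_4$ (as stated in the description of $\Gamma_\eta$ preceding the lemma), and then carry out a short intersection-combinatorics computation using the incidence of the lines and the nodes.

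The key input is the combinatorics of the six nodes of $\im(\phi_\eta)$. Since these are the pairwise intersections $n_i \cap n_j$ of the four lines, each line $n_i$ passes through exactly three nodes, while each node lies on exactly two of the four lines. If $E_1, \ldots, E_6$ denote the exceptional divisors of $b_1$ over the nodes $p_1, \ldots, p_6$, I would write
$$
b_1^{*}\mathcal{O}_{\mathbb{P}^2}(n_i) \;\cong\; \mathcal{O}_S\Bigl(N_i + \sum_{j \colon p_j \in n_i} E_j\Bigr).
$$
Summing over $i = 1, \ldots, 4$ and using that each $E_j$ is counted twice on the right hand side, this gives the identity
$$
b_1^{*}\mathcal{O}_{\mathbb{P}^2}(4) \;\cong\; \mathcal{O}_S\Bigl(\sum_{i=1}^4 N_i + 2\sum_{j=1}^6 E_j\Bigr).
$$

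On the other hand, in the generic case every node has multiplicity $r_j = 2$, so the defining formula for $\mathcal{L}$ recalled before Definition~\ref{def:symmetroid} reads
$$
b_2^{*}\mathcal{O}_{\mathbb{P}^3}(1) \;=\; \mathcal{L} \;=\; b_1^{*}\mathcal{O}_{\mathbb{P}^2}(3) \otimes \mathcal{O}_S\Bigl(-\sum_{j=1}^6 E_j\Bigr).
$$
Squaring this isomorphism and tensoring with $\mathcal{O}_S(-\sum_{i=1}^4 N_i)$ yields
$$
b_2^{*}\mathcal{O}_{\mathbb{P}^3}(2) \otimes \mathcal{O}_S\Bigl(-\sum_{i=1}^4 N_i\Bigr) \;\cong\; b_1^{*}\mathcal{O}_{\mathbb{P}^2}(6) \otimes \mathcal{O}_S\Bigl(-2\sum_{j=1}^6 E_j - \sum_{i=1}^4 N_i\Bigr),
$$
which by the previous identity equals $b_1^{*}\mathcal{O}_{\mathbb{P}^2}(6-4) = b_1^{*}\mathcal{O}_{\mathbb{P}^2}(2)$, exactly as claimed.

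The main obstacle is essentially bookkeeping; the only genuinely geometric input, namely that $N_1, \ldots, N_4$ are the strict transforms of the $n_i$ and are contracted by $b_2$, is part of the description of $\Gamma_\eta$ preceding the lemma. For the non-generic cases the same strategy applies verbatim once $\sum_i N_i$ is replaced by the effective divisor on $S$ obtained by taking the $n_i$ with multiplicities matching the fundamental cycles of the $A_3$ or $A_5$ resolutions (so that $b_2$ still contracts this divisor to the singular points of $\Gamma_\eta$), and the incidence count of lines through the infinitely-near nodes is updated accordingly.
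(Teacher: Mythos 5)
Your proof is correct and follows essentially the same route as the paper: the paper's proof simply cites the two facts that the $n_i$ are lines whose pairwise intersections are the six nodes and that $b_2^*\mathcal{O}(1)\cong b_1^*\mathcal{O}(3)\otimes\mathcal{O}_S(-\sum_j E_j)$, and asserts that these imply the lemma. Your divisor-class computation (total transform of each line equals $N_i$ plus the three $E_j$ over its nodes, each $E_j$ counted twice in the sum over lines, hence $b_1^*\mathcal{O}(4)\cong\mathcal{O}_S(\sum_i N_i+2\sum_j E_j)$) is exactly the implication the paper leaves implicit.
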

\begin{proof}
Consider the curves $n_i=b_1(N_i)$. Then, by the discussion following Definition \ref{def:symmetroid}, the $n_i \in \mathbb{P}^2$ are lines such that the 6 singular points of $\im(\phi_\eta)$ are the intersection points of pairs from $\{n_i \mid i = 1,\ldots, 4\}$.
\par By definition of $b_2$ we have an isomorphism
$$b_2^* \mathcal{O}(1) \cong \mathcal{L} = b_1^*\mathcal{O}_{\PP^2}(3)\otimes \mathcal{O}_S\left(-\sum_{j=1}^6 E_j\right)\,.$$
These two facts imply the lemma.
\end{proof}
We define 
$$
W_{\eta}\colonequals\HH^0\left(S, b_2^*\mathcal{O}(2)\otimes \mathcal{O}_S\!\left(-\sum N_i\right)\right)
$$
and consider it as a vector subspace of $
\HH^0\left(S, b_2^*\mathcal{O}(2)\right) \cong 
S^2 \HH^0(C, \Omega_C)$.
Then, $W_\eta$ is the vector space of quadrics vanishing on the singular locus of $\Gamma_\eta$. One has $\dim(W_\eta) = 6$ because there is an isomorphism $
\psi: S^2 \HH^0(C, \Omega_C (\eta))\stackrel{\sim}{\longrightarrow} W_\eta
$
which is defined to be the composition of isomorphisms
$$
\resizebox{\textwidth}{!}{$
S^2 \HH^0(C, \Omega_C (\eta)) \cong \HH^0(\mathbb{P}^2, \mathcal{O}(2))\overset{b_1^*}{\stackrel{\sim}{\rightarrow}} \HH^0(S, b_1^*\mathcal{O}(2))\cong  \HH^0\left(S, b_2^*\mathcal{O}(2)\otimes \mathcal{O}_S\left(-\sum_i N_i\right)\right) \, .
$}
$$
Geometrically this means the following: for any $q\in S^2\HH^0(C, \Omega_C(\eta))$, the image $\psi(q)$ is the unique (up to scaling) quadratic form that cuts out the curve
$$
c(\mathcal{V}(q)) = b_2(b_1^{-1}(\mathcal{V}(q)))\subset \Gamma_\eta \, .
$$
Notice that the map $\psi$ is only well-defined up to multiplication by a scalar because of the arbitrary choice of an isomorphism $b_1^*\mathcal{O}(2)\cong b_2^* \mathcal{O}(2)\otimes \mathcal{O}_S(-\sum N_i)$. We resolve this ambiguity in the proof of the following lemma which shows that $\psi$ is a right-inverse of $\varphi$ (up to scalar). Indeed, we rescale $\psi$ so that $\varphi \circ \psi = \id$.
\begin{lem}\label{lem: psirightinv} ${}$
\begin{enumerate}[(i)]
\item $W_\eta$ is a subspace of $V_{C, \eta}$.  
\item $\varphi \circ \psi= \alpha\cdot \id\,$ for some $\alpha\in k^\times$.
\end{enumerate}
\end{lem}
\begin{proof}
We start by considering the diagram
\begin{equation}
    \label{eqn:proofdiag}
 \begin{tikzcd}
     & C\arrow[swap]{ddl}{\phi_\eta} \arrow{ddr}\arrow[hook]{d} \\
     & S\arrow{dl}{b_1} \arrow[swap]{dr}{b_2} \\
 \mathbb{P}^2 \arrow[rr, dashrightarrow, "c"] &&\mathbb{P}^3
 \end{tikzcd}
 \end{equation}
 which commutes by \cite[Equation 1.13]{Catanese} and (\ref{eqn:cayley_cubic}). Now recall from Lemma \ref{lem: O2iso} that the definition of $\psi$ is based on the isomorphism
 $$b_1^*\mathcal{O}(2)\cong b_2^* \mathcal{O}(2)\otimes \mathcal{O}_S\!\left(-\sum_i N_i\right)\,.$$
 Restricting this isomorphism to $C$ gives an isomorphism $\Omega_C(\eta)^{\otimes 2} \cong \Omega_C^{\otimes 2}\otimes \mathcal{O}_C\,.$
 (Notice that $C$ cannot go through the nodes of $\Gamma_\eta$ because then $Q\cap \Gamma_\eta$ would be singular.)
 This, together with the commutativity of (\ref{eqn:proofdiag}), implies that the diagram
 $$
 \begin{tikzcd}
     &  S^2 \HH^0(\Omega_C(\eta)) \arrow{d}\arrow[swap]{dl}{\psi}\\
     S^2\HH^0(\Omega_C)\arrow{r} & \HH^0(\Omega_C^{\otimes 2})
 \end{tikzcd}
 $$
 commutes after replacing $\psi$ by a suitable scalar multiple, and we assume that to be done.  From the definition of a fiber product we obtain that $W_\eta = \im(\psi)\subseteq V_{C, \eta}$, proving (i).
 \par  Since the diagram \ref{eqn:phi_diagram} commutes as well, and the map $S^2 \HH^0(\Omega_C (\eta)) \rightarrow \HH^0(\Omega_C^{\otimes 2})$ is injective, we conclude that $\varphi \circ \psi = \id \, ,$ which proves the lemma.
\end{proof}




 
\subsection{Milne's bijection}

Let $\widetilde{C} \to C$ be the \'{e}tale double cover associated to $\eta \in \Jac(C)[2]\setminus \{0\}$, so $\widetilde{C}$ has genus 7. Since the Prym variety $\Prym(\widetilde{C}/C)$ is principally polarized, then, up to quadratic twist, $\Prym(\widetilde{C}/C)$ is isomorphic to the Jacobian of some genus $3$ curve $X$ (see \cite{OortUeno} and \cite{Beauville}). (The Schottky--Jung relations imply that the Prym is indecomposable.)  Since we have an isomorphism
\begin{align*}
\HH^0(X, \Omega_X) &\cong \HH^0\!\left(\Jac(X), \Omega_{\Jac(X)}\right)\\
&\cong \HH^0\!\left(\Prym(\widetilde{C}/C), \Omega_{\Prym(\widetilde{C}/C)}\right) \cong \HH^0(C, \Omega_C (\eta))
\end{align*}
by the definition of the Prym variety, the curve $X$ canonically maps to $\mathbb{P}(\HH^0(C, \Omega_C(\eta)))\cong \mathbb{P}^2$. 

\begin{rem} The Recillas trigonal construction (see \cite[\S50]{coble} or \cite[Lemma 5.6]{BruinSertoz}) gives an explicit geometric construction of $X$ as a plane quartic (generic case), but this is not used in the present article.
\end{rem}
\par In order to state the main theorem of this section we need the following definition. Given a tritangent plane $H$ of $C$, then $H.C = 2D$ where $D$ is a divisor of degree $3$. We say that a pair of tritangents $H, H'$ \textit{differ by} $\eta \in \Jac(C)[2]$ if $D - D'$ is linearly equivalent to $\eta$, where $H.C = 2D$ and $H'.C = 2D'$.

\begin{thm} \label{thm:Milne}
If $X$ is a plane quartic, there is a bijection
$$
\left\{
\begin{tabular}{c}
Pairs of tritangent planes of $C$ differing by $\eta$
\end{tabular}
\right\}
\overset{\sim}{\rightarrow} \left\{
\begin{tabular}{c}
Bitangents of $X$
\end{tabular}
\right\}
$$
such that for any pair $H, H'$ viewed as elements of $\HH^0(C, \Omega_C)$
mapping to a bitangent $\ell \subset \mathbb{P}(\HH^0(C, \Omega_C(\eta)))$ viewed as an element of $\HH^0(C, \Omega_C(\eta))$, we have:
\begin{enumerate}[(i)]
    \item The product $HH'\in S^2 \HH^0(C, \Omega_C)$ lies in $V_{C, \eta}$.
    \item There exists $\lambda \in k^\times$ such that
\begin{equation}
    \label{eqn:Milne}
    \varphi(H H')=\lambda \ell^2.
\end{equation}
\end{enumerate}
\end{thm}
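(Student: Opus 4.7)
The plan is to construct the bijection explicitly from a divisor calculation, read off (i) and (ii) from that construction, and then invoke the classical Prym--Recillas correspondence to identify the resulting linear forms with bitangents of $X$.

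Given a pair $(H, H')$ of tritangents of $C$, write $H\cdot C = 2D$ and $H'\cdot C = 2D'$ with $D - D'\sim \eta$. Combining $2D'\sim K_C$ with $D\sim D' + \eta$ yields
\[
D + D' \;\sim\; 2D' + \eta \;\sim\; K_C + \eta \, ,
\]
so $D + D'$ is an effective degree-$6$ divisor in the complete linear system $|\Omega_C(\eta)|$. Hence there exists a section $\ell\in \HH^0(C, \Omega_C(\eta))$, unique up to scalar (because $h^0(\Omega_C(\eta)(-D-D')) = h^0(\mathcal{O}_C) = 1$), with $\mathrm{div}(\ell) = D + D'$. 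We define the map of the bijection by $(H, H') \mapsto \ell$.

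For (i) and (ii), fix the canonical trivialization of $\eta^{\otimes 2}$ from the proof of Lemma~\ref{lem: psirightinv}, which identifies $\Omega_C(\eta)^{\otimes 2}$ with $\Omega_C^{\otimes 2}$. Both $HH'$ and $\ell^2$ are then sections of $\Omega_C^{\otimes 2}$ with divisor $2(D + D')$, so they differ by a nonzero scalar $\lambda\in k^\times$, giving $HH' = \lambda\,\ell^2$. Reading this identity inside $\HH^0(\Omega_C(\eta)^{\otimes 2})$ exhibits $HH'$ as the image of $\lambda\,\ell^2 \in S^2\HH^0(\Omega_C(\eta))$ under the multiplication map. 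By the definition of the fibre product $V_{C,\eta}$, this proves $HH'\in V_{C,\eta}$ (property (i)); by the definition of $\varphi$ as projection onto the first factor, it proves $\varphi(HH') = \lambda\,\ell^2$ (property (ii)).

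To finish, we verify that $\ell$ really cuts out a bitangent of the plane quartic $X\subset \PP(\HH^0(\Omega_C(\eta)))$ and that $(H, H')\mapsto \ell$ is bijective. For this we invoke the classical Wirtinger--Coble--Recillas correspondence (see \cite[\S50]{coble}, \cite{Catanese}, \cite{BruinSertoz}), which identifies unordered pairs $\{\theta, \theta + \eta\}$ of odd theta characteristics of $C$ with odd theta characteristics of the Prym $\Jac(X)$, hence with bitangents of $X$; unwinding the identification $\HH^0(\Omega_X)\cong \HH^0(\Omega_C(\eta))$, the bitangent attached to $\{D, D'\}$ is precisely the line cut out by our $\ell$. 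Injectivity of $(H, H')\mapsto \ell$ is immediate: $\ell$ determines $D + D' = \mathrm{div}(\ell)$, and generically the only effective degree-$3$ theta-characteristic subdivisors of $D + D'$ are $D$ and $D'$, recovering the unordered pair. Surjectivity then follows by matching cardinalities (both $28$ for generic $C$). The main obstacle is precisely this last step: the divisor calculation yielding (i) and (ii) is essentially formal, but identifying the line defined by $\ell$ with an actual bitangent of $X$ requires the nontrivial Prym--Recillas machinery that transfers odd theta characteristics from $C$ to $X$.
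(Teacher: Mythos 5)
Your proposal is correct, and for parts (i) and (ii) it takes a more self-contained route than the paper. The paper's own proof is essentially a set of pointers to the literature: the bijection is attributed to Milne \cite{Milne} (with \cite[Theorem 7.1]{BruinSertoz} as the modern reference), assertion (i) is credited to Lehavi \cite{Lehavi}, and (ii) is only sketched, by combining the Bruin--Sert\"oz argument with the geometric interpretation of $\psi$ and Lemma \ref{lem: psirightinv}. You instead derive (i) and (ii) directly from the divisor computation $D+D'\sim K_C+\eta$: once a trivialization of $\eta^{\otimes 2}$ is fixed, the images of $HH'$ and $\ell^{2}$ in $\HH^0(\Omega_C^{\otimes 2})$ are nonzero sections with the same divisor $2(D+D')$, hence proportional, and both $HH'\in V_{C,\eta}$ and $\varphi(HH')=\lambda\ell^{2}$ then fall out of the definition of the fibre product. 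This is cleaner, makes the constant $\lambda$ transparent, and avoids routing (ii) through $\psi$ altogether. Both arguments, however, rest on the same nontrivial external input --- that the section $\ell$ with $\mathrm{div}(\ell)=D+D'$ actually cuts out a bitangent of the Prym curve $X$ --- which you correctly isolate as the crux and attribute to the Wirtinger--Coble--Recillas/Milne correspondence; neither you nor the paper reproves it. The only soft spot is your bijectivity argument: the injectivity claim about theta-characteristic subdivisors of $D+D'$ and the $28=28$ count are stated only ``generically,'' whereas the cited \cite[Theorem 7.1]{BruinSertoz} gives the bijection whenever $X$ is a plane quartic; since the paper only applies the theorem to generic $C$, this does not affect anything downstream.
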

\begin{proof}
    The bijection between pairs of tritangent planes differing by $\eta$ and bitangents of $X$ was discovered by W. P. Milne \cite{Milne} (see \cite[Theorem 7.1]{BruinSertoz} for a modern proof). The assertion (i) is due to Lehavi \cite[p. 2]{Lehavi}. To prove (ii) one can use the argument from \cite[Theorem 7.1]{BruinSertoz}, the geometric interpretation of the map $\psi$, and Lemma \ref{lem: psirightinv}.
\end{proof}
\begin{rem}\label{rem: MilneHyp} A similar statement holds when $X$ is a hyperelliptic curve. The main difference is that one has to replace the bitangents of $X$ with the lines through the pairs of Weierstrass points of $X$ in the bijection (see \cite[Section 7.2]{BruinSertoz}).

\end{rem}

Milne's theorem will play a key role in reconstructing the genus $4$ curve because it links the map $\varphi$ to the Prym. Since the bitangent lines of $X$ can be computed with classical formulas, namely the Schottky--Jung relations \cite[Theorem 1]{FarkasRauch}, and the Aronhold-Weber formulas (see \cref{subsec:aronholdweber}), the information on the righthand side of the identity
$$\varphi(H H')=\lambda \ell^2$$
can be readily computed, except for the unknown constant $\lambda$. A central step of our method is to compute the map $\varphi$ by interpolating this identity through ten pairs of tritangent planes and their corresponding bitangents.

\subsection{The bielliptic case}\label{sec:bielliptic}
We will explain now the modifications that have to be made in the bielliptic case. All the facts in this section are due to Catanese and Bruin--Sert\"oz; see \cite[(1.9)]{Catanese}, \cite[\S4.2]{BruinSertoz} for proofs. Let $C$ be a smooth non-hyperelliptic genus $4$ curve and $\eta\in \Jac(C)[2]\setminus \{0\}$. We will say that the pair $(C, \eta)$ is \emph{bielliptic} if there is a degree two map $\pi: C\rightarrow E$ onto a smooth genus $1$ curve and a two-torsion point $\eta_0 \in \Jac(E)$ such that $\pi^* \eta_0=\eta$.
\par Then $(C, \eta)$ is bielliptic if and only if the Prym canonical map $\phi_\eta: C\rightarrow \mathbb{P}^2$ factors through a degree two map onto a smooth plane cubic $E\subset \mathbb{P}^2$. In this situation, one can still obtain $\Gamma_\eta$ as the cone over $E$ with vertex corresponding to the one-dimensional subspace
$$\pi^* \HH^0(E, \Omega_E)\subset \HH^0(C, \Omega_C)\,.$$
One still has $C\subset \Gamma_\eta$ and the map $\pi: C\rightarrow E$ is induced by projecting away from the vertex of $\Gamma_\eta$.
\par On the other hand, $\Gamma_\eta$ is a cubic symmetroid since the two-torsion point $\eta_0$ defines a symmetric determinantal equation for $E\subset \mathbb{P}^2$ \cite[Section 4.1.3]{Dolgachev}. But this cubic symmetroid is degenerate in the following sense: when writing
$$\Gamma_\eta=\mathcal{V}\left(\det\left(\sum_{i=0}^3 A_i x_i\right)\right)$$
with $A_i\in \Mat_{3,3}(k)$ symmetric, the matrices $A_i$ are linearly dependent. Also, the map $c$ does not exist in the bielliptic case because $\Gamma_\eta$ is not a rational surface.
\par Nevertheless, the linear maps $\varphi, \psi$ are still defined. Indeed, $\varphi$ was defined unconditionally. On the other hand, the map $$\psi: S^2 \HH^0(C, \Omega_C(\eta)) \rightarrow W_\eta \subset S^2 \HH^0(C, \Omega_C)$$
can be defined by taking $W_\eta$ to be the set of quadrics that are singular at the vertex of $\Gamma_\eta$. The map $\psi$ is then the map sending a conic to the affine cone over it.
\par All the results from the previous sections still hold in the bielliptic case. However, the proof of Lemma \ref{lem: psirightinv} needs a separate argument which we give now. Indeed, instead of diagram \ref{eqn:proofdiag} we consider the diagram
$$\begin{tikzcd}
&C \arrow{dl}[swap]{\phi_\eta}\arrow{dr}\\
\mathbb{P}^2 && \mathbb{P}^3 \arrow[ll, dashrightarrow]
\end{tikzcd}$$
where the map $\mathbb{P}^3\dashrightarrow \mathbb{P}^2$ is the projection away from the vertex of $\Gamma_\eta$. The diagram commutes because, as noted above, this projection induces the map $C\rightarrow E$. The rest of the proof follows the same line of reasoning as in Lemma \ref{lem: psirightinv}.
\subsection{Tritangent planes and theta derivatives}
In order to compute of the tritangent planes on the left-hand side of Equation (\ref{eqn:Milne}), we require formulas that express them purely in terms of the theta constants. To derive such formulas one begins with the following well-known connection between tritangent planes and theta derivatives.
\begin{thm}
    Let $C$ be a smooth curve of genus $4$ over $\mathbb{C}$ with small period matrix $\tau$.
    \par For any odd theta characteristic $\begin{bmatrix} \delta\\\varepsilon     \end{bmatrix} \in \frac{1}{2}\mathbb{Z}^{8}/\mathbb{Z}^8$, the equation
    \begin{equation}
        \sum_{i=1}^4 \frac{\partial \thetacha{}{\delta}{\varepsilon}}{\partial z_i}(0, \tau) x_i=0
    \end{equation}
    defines a tritangent plane for the canonical image of $C$ in the $\mathbb{P}^3$ with coordinates $x_1,\ldots, x_4$. Here the basis for $\HH^0(C, \Omega_C)$ must be the basis induced by the isomorphism
    $$\Jac(C)\cong \mathbb{C}^g/(\mathbb{Z}^g + \tau \mathbb{Z}^g)\,.$$
 \end{thm}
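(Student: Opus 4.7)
The plan is to combine the classical gradient form of Riemann's vanishing theorem with the observation that odd theta characteristics are effective theta characteristics.

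First, I would record that an odd characteristic $[a;b]$ (that is, $4a^t b \equiv 2 \pmod{4}$) makes $z \mapsto \vartheta[a,b](z, \tau)$ an odd function, so $\vartheta[a,b](0,\tau) = 0$. Writing $e \colonequals \tau a + b$ for the corresponding half-period, the standard transformation formula
$$\vartheta[a,b](z, \tau) = \exp\!\bigl(\pi i\, a^t \tau a + 2\pi i\, a^t(z+b)\bigr)\,\vartheta(z + e, \tau)$$
shows that the gradient $\bigl(\partial_i \vartheta[a,b]\bigr)(0, \tau)$ is a nonzero scalar multiple of $(\partial_i \vartheta)(e, \tau)$. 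For a generic $C$, $\vartheta$ is smooth at $e$, so by the Riemann vanishing theorem there is a unique effective divisor $D = p_1 + p_2 + p_3$ with $e \equiv u(D) - \kappa \pmod{\mathbb{Z}^g + \tau\mathbb{Z}^g}$, where $u$ is the Abel--Jacobi map and $\kappa$ is the Riemann constant.

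Second, I would invoke the \emph{gradient form} of Riemann's theorem: at such a smooth zero $e$ of $\vartheta$, the holomorphic differential
$$\omega_e \colonequals \sum_{i=1}^{g} \frac{\partial \vartheta}{\partial z_i}(e, \tau)\,\omega_i \;\in\; \HH^0(C, \Omega_C)$$
is nonzero with divisor $D + D'$, where $D'$ is the residual divisor satisfying $D + D' \sim K_C$. The identification uses precisely the basis $\omega_1, \ldots, \omega_g$ normalized so that the period matrix is $(I \mid \tau)$. This is classical and I would cite it directly from Griffiths--Harris or Mumford's Tata Lectures on Theta (Vol.~II) rather than reprove it.

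Third, I would use that $[a;b]$ is a theta characteristic: $2e \in \mathbb{Z}^g + \tau \mathbb{Z}^g$, hence $2D \sim K_C$ in $\Pic(C)$, and consequently $D' = D$. Therefore $\mathrm{div}(\omega_e) = 2D = 2p_1 + 2p_2 + 2p_3$. Since, under the canonical embedding in $\mathbb{P}^3$ with coordinates $x_1, \ldots, x_4$ dual to $\omega_1, \ldots, \omega_4$, the pullback of the hyperplane $\sum_i (\partial_i\vartheta[a,b])(0,\tau)\,x_i = 0$ to $C$ is (a nonzero scalar multiple of) $\omega_e$, this hyperplane cuts out $2p_1 + 2p_2 + 2p_3$ on the canonical model, i.e., it is tangent to the canonical image at each $p_j$. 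This is the definition of a tritangent plane.

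The only nontrivial ingredient is the gradient formula for $\omega_e$; proving it from scratch requires a careful infinitesimal analysis of the pullback of $\vartheta$ along translates of the Abel--Jacobi map at a smooth point of $W_{g-1}$, identifying the conormal direction with a canonical divisor. Everything else is bookkeeping with the transformation formula for $\vartheta[a,b]$ and the characterization of odd characteristics as effective theta characteristics.
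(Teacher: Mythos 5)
Your argument is correct and is precisely the classical proof that the paper delegates to its citation of Fiorentino: pass from the odd characteristic to the half-period $e=\tau a+b$ via the transformation formula, use the Riemann vanishing/singularity theorems to produce the unique effective degree-$3$ divisor $D$ with $\Theta$ smooth at $e$, apply the gradient form of Riemann's theorem to identify $\sum_i \partial_i\vartheta(e)\,\omega_i$ as the differential cutting out $D+D'$, and use $2D\sim K_C$ to conclude $D'=D$, so the hyperplane meets the canonical curve in $2p_1+2p_2+2p_3$. One refinement: the genericity hypothesis in your second step is unnecessary, since for any smooth genus $4$ curve Clifford's theorem bounds $h^0(D)\leq 2$ for a degree-$3$ special divisor and oddness of the characteristic forces $h^0(D)=1$, so $\Theta$ is automatically smooth at $e$ and the argument covers every smooth (non-hyperelliptic, so that the canonical map is an embedding) genus $4$ curve as the statement requires.
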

\begin{proof}
See, e.g., \cite[p.~5]{Fiorentino}.
\end{proof}
Next, we want to replace the theta derivatives by theta constants. For this we begin with the same approach as in the genus $3$ case \cite{Fiorentino}: one chooses $5$ tritangent planes and applies a coordinate transform that puts them into the standard form
$$x_i=0, \qquad x_1+x_2+x_3+x_4=0.$$
Then Cramer's rule expresses the equations for the other tritangent planes in terms of determinants of Jacobi matrices of theta functions (see \cite[Equation (17)]{Fiorentino} or Equation (\ref{eqn:tritangents}) in the present article).
The next section explains how these determinants can be expressed in terms of theta constants.
\subsection{The generalized Jacobi derivative identity}
In this section, we introduce Fay's generalization of Jacobi's derivative formula. It expresses the Jacobian determinant of an azygetic system of odd theta functions evaluated at $0$ as a polynomial in the theta nullvalues.
\par First, we will recall some definitions from the theory of theta characteristics. A system of theta characteristics $c_1, c_2, c_3\in \frac{1}{2}\mathbb{Z}^{2g}$ is called an \emph{azygetic} triple if
$$e_*(c_1+c_2+c_3)=-e_*(c_1)e_*(c_2)e_*(c_3) \, ,$$
where $e_*: \frac{1}{2}\mathbb{Z}^{2g} \rightarrow \{\pm 1\}$ is the parity map $\begin{bmatrix}
    \delta\\ \varepsilon
\end{bmatrix} \mapsto (-1)^{4 \delta^t \varepsilon}$.
More generally, an arbitrary system of characteristics $c_1, \ldots, c_n$ is called \emph{azygetic} if any triple contained in $c_1, \ldots, c_n$ is an azygetic triple.
\par A system of theta characteristics is called \emph{essentially independent} if every sum of a subset of even cardinality of the system is non-zero.
\begin{definition}
A \emph{special fundamental system} is a system of $2g+2$ characteristics $m_1, \ldots, m_g, n_1, \ldots, n_{g+2}\in \frac{1}{2}\mathbb{Z}^g$ such that:
\begin{enumerate}

\item[i)] $m_1, \ldots, m_g, n_1, \ldots, n_{g+2}$ is azygetic.
\item[ii)] The characteristics $m_1, \ldots, m_g$ are odd.
\item[iii)] The characteristics $n_1, \ldots, n_{g+2}$ are even.
\end{enumerate}
\end{definition}

We will now define the left-hand side of the generalized Jacobi derivative identity.
\begin{definition}
Let $m_1, \ldots, m_g\in \frac{1}{2}\mathbb{Z}^{2g}$ be a system of odd characteristics. The \emph{Jacobian nullvalue} of $m_1, \ldots, m_g$ is defined to be the function on the Siegel upper half space
$$D(m_1, \ldots, m_g): \mathfrak{H}_g \longrightarrow \mathbb{C}$$
given by the formula
$$D(m_1, \ldots, m_g)(\tau)=\pi^{-g} \det\left(\left(\frac{\partial\vartheta[m_i]}{\partial z_j}\right)_{i,j=1,\ldots ,g }\right)(0, \tau) \, . $$
\end{definition}
Now if $m_1, \ldots, m_g$ is azygetic and essentially independent then, for $g\leqslant 5$, $D(m_1, \ldots, m_g)$ can be expressed as the following polynomial in the theta nullvalues.
\begin{thm}\label{ThmJacobi}
Let $g\leqslant 5$ and $M=\{m_1, \ldots, m_g\}$ be an azygetic essentially independent system of odd theta characteristics. Then for all $\tau\in \mathfrak{H}_g$,
\begin{equation} \label{eqn:JacobiDeriv}
    D(m_1, \ldots, m_g)(\tau)=\sum_N \pm \prod_{i=1}^{g+2} \vartheta[n_i](0, \tau)
\end{equation}
where the sum runs over all sets $N=\{n_1, \ldots, n_{g+2} \}$ such that $M\cup N$ is a special fundamental system.
\par Furthermore, the signs $\pm$ are explicit, unique and independent of $\tau$.
\end{thm}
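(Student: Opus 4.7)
The plan is to prove this classical identity of Fay by an induction on $g$ combined with a Siegel modular forms argument, with base case $g=1$ being Jacobi's original derivative formula. The restriction $g\leqslant 5$ will enter only in the final step via a dimension count, so most of the argument is uniform in the genus.

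First I would verify that each side of the proposed identity is a Siegel modular form of weight $(g+2)/2$ with the same multiplier system on an appropriate congruence subgroup of $\Sp(2g,\ZZ)$, for instance $\Gamma(4,8)$. For the Jacobian nullvalue $D(m_1,\ldots,m_g)$ one combines the classical transformation law for theta derivatives under $\Sp(2g,\ZZ)$ with the azygetic property of $M$, which guarantees that all characteristics in sight transform consistently. For each product $\prod_{i=1}^{g+2}\vartheta[n_i](0,\tau)$ one uses the multiplicative transformation rule for even theta constants, again exploiting the azygetic condition on $M\cup N$. The effect of symplectic transformations on both sides is to permute the characteristics inside the set of special fundamental systems compatible with $M$; matching these permutations is exactly what produces (and determines the uniqueness of) the explicit signs in the statement.

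The key step is to compare the two sides on the boundary of the Siegel moduli space. I would degenerate $\tau$ to a block diagonal matrix $\tau=\tau_1\oplus\tau_2$ with $\tau_1\in\mathfrak{H}_{g_1}$, $\tau_2\in\mathfrak{H}_{g_2}$, and $g_1+g_2=g$. Along such a degeneration theta constants factor as $\vartheta[n](0,\tau)=\vartheta[n^{(1)}](0,\tau_1)\vartheta[n^{(2)}](0,\tau_2)$, partial derivatives with respect to variables in different blocks decouple, and the Jacobian determinant on the left block-decomposes according to all ways of splitting the odd characteristics $M$ into an odd subsystem on each factor. Careful bookkeeping of how the azygetic and essentially independent conditions restrict along the factorization, together with the induction hypothesis applied to the two smaller genera (with Jacobi's original formula covering the base case), shows that the difference between the two sides vanishes on every such boundary component.

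The main obstacle, and the reason for the restriction $g\leqslant 5$, is upgrading this boundary vanishing to identical vanishing. Concretely, one needs to know that a Siegel modular form of weight $(g+2)/2$ for the relevant congruence subgroup which vanishes along every rank-one boundary stratum is itself zero. For $g\leqslant 5$ a direct dimension count for the space of such forms of this low weight (or equivalently a Koecher-type analysis at the cusps) gives this vanishing, while for $g\geqslant 6$ the dimension becomes positive and the argument genuinely breaks down. Once the identity is established up to an overall constant, the signs are pinned down uniquely by specializing to a point where some product $\prod\vartheta[n_i](0,\tau)$ is nonzero and comparing with the outcome of the induction, which traces everything back to the explicit signs in Jacobi's classical formula.
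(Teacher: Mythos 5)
The paper offers no proof of this theorem: it cites Frobenius for $g=4$ and Fay for $g\leqslant 5$. So your attempt is necessarily an independent route, and its general framework is the right one — both sides are scalar Siegel modular forms of weight $(g+2)/2$ on $\Gamma_g(4,8)$ (your weight count is correct: $g$ gradients of weight $\tfrac12$ plus one factor of $\det$ from the Jacobian determinant), and one wants to force their difference to vanish. Nevertheless the argument has two genuine gaps.

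First, your key step conflates the locus of block-diagonal period matrices $\tau=\tau_1\oplus\tau_2$ with the boundary of the Siegel moduli space. Block-diagonal $\tau$ parametrizes decomposable principally polarized abelian varieties, a closed subvariety of the \emph{interior} of $\mathcal{A}_g$ of codimension $g_1g_2$; a nonzero modular form can perfectly well vanish there, so an induction that only controls the difference on this locus proves nothing. What you presumably want is the Siegel $\Phi$-operator, i.e.\ the limit $\tau_2\to i\infty\cdot I_{g_2}$, which genuinely computes restriction to the boundary and, combined with the inductive hypothesis, would show the difference is a cusp form. Second, even after that repair, the concluding step — a ``direct dimension count'' showing that cusp forms of weight $(g+2)/2$ on $\Gamma_g(4,8)$ vanish for $g\leqslant 5$ — is asserted rather than proved, and it is exactly the hard content: the weight $(g+2)/2=g/2+1$ sits just above the singular weight $g/2$ where the general vanishing theorems for cusp forms stop applying, and $\Gamma_g(4,8)$ has enormous index, so low weight alone does not make this space obviously zero. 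Your explanation of the failure for $g\geqslant 6$ is also not the actual obstruction: as recalled in Remark \ref{rem:ConjIgusa}, Fay's point is that for $g\geqslant 6$ the Jacobian nullvalue is not a polynomial in the theta constants at all, i.e.\ it fails to lie in the image of the theta map — a structural statement about the ring of modular forms, not the nonvanishing of one space of cusp forms. As written, the proposal is a plausible skeleton of the classical argument but does not close.
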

\begin{proof}
See \cite{Frobenius} for $g=4$ and \cite{Fay} for $g\leqslant 5$.
\end{proof}
\begin{rem}
The number of terms on the right of Fay's generalized Jacobi derivative identity is given by the following table.\\
\begin{center}
\begin{tabular}{c|c|c|c|c|c}
$g$ & 1 & 2 &3 &4 &5\\
\hline
no. of terms & 1 & 1& 1& 2 & 8
\end{tabular}
\end{center}
\end{rem}
\begin{rem}\label{rem:ConjIgusa} Igusa \cite{Igusa1980Jacobi}\cite{IgusaMultiplicity} has a conjectural generalization of Fay's generalized Jacobi identity to arbitrary $g\geqslant 6$. However, as it is known that in this case a Jacobian nullvalue $D(m_1, \ldots, m_g)$ cannot be a polynomial in the theta constants \cite[p. 12]{Fay}, Igusa's conjectural formula has a sum of Jacobian nullvalues on the left-hand side.
\par It would be an interesting problem to use Igusa's conjectural identity to express Jacobian nullvalues as rational functions in the theta constants. Such expressions must exist by general principles \cite[Theorem V.9]{IgusaThetaBook}, but they seem to be unknown. For work beyond the azygetic case, see \cite{Salvati-Manni}.
\end{rem}

\section{Reconstructing the curve} \label{sec:main}
\subsection{An auxiliary set of odd theta characteristics}\label{sub:oddtheta}
In order to use the generalized Jacobi identity (Theorem \ref{ThmJacobi}) for the computation of the pairs of tritangent planes in Equation (\ref{eqn:Milne}) we will need a set of odd theta characteristics satisfying certain properties. It is not possible to work with an Aronhold system in analogy with genus $3$ because Aronhold systems in genus $4$ consist of even theta characteristics \cite{GrossHarris}.

Instead, we observe that on the left-hand side of the identity we have the Jacobian nullvalue of a set of four azygetic essentially independent odd theta characteristics. Trying to optimize the use of the generalized Jacobi identity leads us to choose the following auxiliary set of odd theta characteristics.
\begin{lem} \label{lem:special_characteristics}
Let $\eta\in \frac{1}{2}\mathbb{Z}^8/\mathbb{Z}^8$ be arbitrary. There exist odd theta characteristics $\xi_1, \ldots, \xi_5,\, \chi_1, \chi_1', \ldots, \chi_{10}, \chi_{10}' \in \frac{1}{2}\mathbb{Z}^8/\mathbb{Z}^8$ such that
\begin{itemize}
    \item[i)] $\xi_1, \ldots, \xi_5$ is azygetic and essentially independent.
    \item[ii)] For all $i\in \{1,\ldots, 10\}$ the systems $\xi_1, \ldots, \xi_4, \chi_i$ and $\xi_1, \ldots, \xi_4, \chi_i'$ are azygetic and essentially independent.
    \item[iii)] For all $i$ one has $\chi_i-\chi_i'=\eta$.
\end{itemize}
\end{lem}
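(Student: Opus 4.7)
My plan is to combine a symmetry reduction with an explicit construction in $\F_2^{2g}$. The orthogonal group $\mathrm{O}(2g,\F_2)$ preserving the parity form $q([a;b]) = 4a^tb \bmod 2$ acts on the set of characteristics and preserves parity, azygetic triples, and essential independence, since all of these notions are expressible in terms of $q$ and the associated symplectic form $B(x,y) = q(x+y) - q(x) - q(y)$. Its orbits on $\F_2^{2g}$ are $\{0\}$, the non-zero isotropic set $\{v \neq 0 : q(v) = 0\}$, and the anisotropic set $\{v : q(v) = 1\}$. Hence it suffices to prove the lemma for $\eta = 0$ (trivial, taking $\chi_i = \chi_i'$) and for one fixed representative $\eta$ in each of the two non-zero orbits.

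Fix $\eta \neq 0$. A direct computation with the identity $q(a+b+c) = q(a)+q(b)+q(c)+B(a,b)+B(a,c)+B(b,c)$ shows that the azygetic condition on $\{\xi_i,\xi_j,\chi\}$ is equivalent to $B(\xi_i+\xi_j,\chi) = 1 + B(\xi_i,\xi_j)$, and that the analogous condition on $\{\xi_i,\xi_j,\chi+\eta\}$ simultaneously holds if and only if $B(\xi_i+\xi_j,\eta) = 0$ for all $i<j$ -- equivalently, all $\xi_i$ lie in a common coset of $\eta^\perp$. The requirement that both $\chi$ and $\chi+\eta$ be odd is the single additional linear constraint $B(\chi,\eta) = q(\eta)$.

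I therefore choose $\xi_1,\ldots,\xi_5$ to be five odd characteristics in a common coset of $\eta^\perp$ forming an azygetic essentially independent system; existence is ensured by a standard counting argument, since each coset contains on the order of $2^{2g-2}$ odd characteristics while the defining conditions cut out only a small number of affine subvarieties. Once the $\xi_i$ are fixed, the candidate $\chi$'s form an affine subspace of $\F_2^{2g}$ of codimension at most $4$, hence of cardinality at least $2^{2g-4} = 16$ when $g=4$. After deleting the at most $16$ points ruled out by essential independence for $\{\xi_1,\ldots,\xi_4,\chi\}$ and $\{\xi_1,\ldots,\xi_4,\chi+\eta\}$, one is left with the required $10$ pairs $\chi_i,\, \chi_i' = \chi_i + \eta$.

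The main obstacle is the tightness of the final counting step: the naive estimate $16 - 16$ is not visibly positive, so one must argue that the forbidden points are not all contained within the valid affine subspace, or supplement the argument with explicit enumeration. We plan to record a concrete list of characteristics $\xi_i$, $\chi_j$, $\chi_j'$ for each orbit representative of $\eta$, verified by a \Magma{} computation.
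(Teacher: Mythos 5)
Your overall strategy---normalize $\eta$ by a symmetry group, reformulate the conditions linearly over $\F_2$, then construct explicitly---is the same as the paper's, which simply declares $\eta=\frac{1}{2}\begin{bmatrix}0&0&0&0\\1&0&0&0\end{bmatrix}$ without loss of generality and exhibits an explicit list. Two remarks on your reduction. First, the paper's normalization rests on the classical \emph{affine} action of $\Sp(8,\F_2)$ on characteristics (the theta transformation formulas), whose linear part is all of $\Sp(8,\F_2)$ and hence transitive on nonzero $\eta$; your linear orthogonal group preserving $q$ has two nonzero orbits, so you would be committed to a second explicit case for anisotropic $\eta$. Second, only $\xi_1,\dots,\xi_4$ appear in condition (ii), so only those four need to lie in a common coset of $\eta^\perp$; in the paper's list $\xi_5$ in fact does not, and insisting that it do so is an unnecessary extra constraint. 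Your reformulations of azygeticity via $B(\xi_j+\xi_k,\chi)=1+B(\xi_j,\xi_k)$ and of the simultaneous condition on $\chi+\eta$ are correct.

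The genuine gap is the final count, and it is worse than your closing caveat concedes. You need $20$ odd characteristics (ten pairs $\chi_i,\ \chi_i'=\chi_i+\eta$), but a codimension-$4$ affine subspace of $\F_2^8$ has only $16$ points---too few even before imposing oddness, which is a \emph{quadratic} condition $q(\chi)=1$ not included among your four linear constraints, so the candidate set is not an affine subspace at all. The resolution, visible in the paper's explicit answer, is that $\xi_1,\dots,\xi_4$ must be chosen so that $\eta$ lies in the $\F_2$-span of the pairwise sums $\xi_j+\xi_k$ (a $3$-dimensional space). Then your constraint $B(\chi,\eta)=q(\eta)$ is implied by the three independent azygetic constraints (whose mutual consistency is exactly the azygeticity of the $\xi_j$), the codimension drops to $3$, the subspace has $32$ points, and exactly $20$ of them are odd: in the paper's coordinates these are the characteristics with left $2\times 2$ block $\left[\begin{smallmatrix}0&1\\0&1\end{smallmatrix}\right]$ or $\left[\begin{smallmatrix}0&1\\1&1\end{smallmatrix}\right]$ satisfying $a_3b_3+a_4b_4=0$. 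Essential independence then costs nothing, since the $\xi_j$ and the forbidden sums $\xi_j+\xi_k+\xi_l$ all have first top-row entry $1$ while every candidate $\chi$ has first top-row entry $0$. Without the extra condition $\eta\in\lspan\{\xi_j+\xi_k\}$ your argument cannot close, and your declared fallback---a concrete \Magma-verified list---is precisely what the paper's proof consists of.
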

\begin{proof}
Without loss of generality we can assume that $\eta=\frac{1}{2}\begin{bmatrix}
    0 & 0 & 0 & 0\\
    1 & 0 & 0 & 0
\end{bmatrix}$. (This is the convention used in, e.g., \cite{FarkasRauch}.) An explicit answer is then given by:

\begin{align*}
\xi_1 &=\frac{1}{2}\begin{bmatrix} 1 & 1 & 1 & 0 \\ 1 & 1 &  1 & 0 \end{bmatrix}, \quad
\xi_2 =\frac{1}{2}\begin{bmatrix} 1 & 0 & 1 & 0 \\ 0 & 0 &  1 & 0 \end{bmatrix}, \quad
\xi_3 =\frac{1}{2}\begin{bmatrix} 1 & 1 & 1 & 0 \\ 0 & 0 &  1 & 0 \end{bmatrix},\\
\xi_4 &=\frac{1}{2}\begin{bmatrix} 1 & 0 & 1 & 0 \\ 0 & 1 &  1 & 0 \end{bmatrix}, \quad
\xi_5 =\frac{1}{2}\begin{bmatrix} 0 & 1 & 1 & 0 \\ 0 & 1 &  0 & 0 \end{bmatrix},
\end{align*}
\begin{align*}
\chi_1 &=\frac{1}{2}\begin{bmatrix} 0 & 1 & 1 & 1 \\ 0 & 1 &  1 & 1 \end{bmatrix}, \quad
\chi_2 =\frac{1}{2}\begin{bmatrix} 0 & 1 & 0 & 1 \\ 0 & 1 &  0 & 0 \end{bmatrix},\quad
\chi_3 =\frac{1}{2}\begin{bmatrix} 0 & 1 & 0 & 0 \\ 0 & 1 &  0 & 1 \end{bmatrix},\\
\chi_4 &=\frac{1}{2}\begin{bmatrix} 0 & 1 & 0 & 0 \\ 0 & 1 &  1 & 0 \end{bmatrix},\quad
\chi_5 =\frac{1}{2}\begin{bmatrix} 0 & 1 & 1 & 0 \\ 0 & 1 &  0 & 0 \end{bmatrix}, \quad 
\chi_6 = \frac{1}{2}\begin{bmatrix} 0 & 1 & 1 & 0 \\ 0 & 1 &  0 & 1 \end{bmatrix},\\
\chi_7 &=\frac{1}{2}\begin{bmatrix} 0 & 1 & 0 & 0 \\ 0 & 1 &  1 & 1 \end{bmatrix}
, \quad
\chi_8 =\frac{1}{2}\begin{bmatrix} 0 & 1 & 0 & 1 \\ 0 & 1 &  1 & 0 \end{bmatrix}, \quad
\chi_9 =\frac{1}{2}\begin{bmatrix} 0 & 1 & 1 & 1 \\ 0 & 1 &  0 & 0 \end{bmatrix}, \\
\chi_{10} &=\frac{1}{2}\begin{bmatrix} 0 & 1 & 0 & 0 \\ 0 & 1 &  0 & 0 \end{bmatrix}
\end{align*}
and $\chi_i' = \chi_i +\eta$ for $i \in \{1,\ldots,10\}$.
\end{proof}
\begin{rem} The answer in the previous proof is found by choosing $\xi_1, \ldots, \xi_5$ suitably and then solving a system of equations over $\mathbb{F}_2$ for $\chi_1, \chi_1', \ldots, \chi_{10}, \chi_{10}'$ allowing $\eta$ to be arbitrary. In the end one transforms the characteristics such that $\eta$ becomes $\frac{1}{2}\begin{bmatrix}
    0 & 0 & 0 & 0\\
    1 & 0 & 0 & 0
\end{bmatrix}$.
\par One can still see the (affine) linear structure in the shape of the characteristics $\chi_i, \chi_i'$. Indeed, the $\chi_i$ are all the odd theta characteristics whose left $2\times 2$ block is
$$\begin{bmatrix}
0 & 1\\
0 & 1
\end{bmatrix} \, .$$
From this observation one sees that one cannot add further pairs $\chi_i, \chi_i'$ with the required properties to the system.
\end{rem}
For the rest of the article, we define $H_i, H_i' \in \HH^0(C, \Omega_C)$ to be the linear forms cutting out the tritangent planes corresponding to $\chi_i, \chi_i'$.
By construction, for any $i=1, \ldots, 10$ the tritangent planes $H_i$ and $H_i'$ differ by $\eta$. We denote by $\ell_i \in \HH^0(C, \Omega_C(\eta))$ the linear form cutting out the bitangent of $X$ that corresponds to $(H_i, H_i')$ under Milne’s bijection (Theorem \ref{thm:Milne}).
\par The main point of the special conditions in Lemma 5.1 is that they guarantee that the tritangent planes $H_i, H_i'$ can be written with simple formulas in terms of the theta constants, as we now explain. Indeed, we choose the coordinate system for $\mathbb{P}^3$ such that the tritangent planes corresponding to the odd characteristics $\xi_i,\, i=1, \ldots, 5$ are in normal form
$$
x_i=0, \quad i\in \{0,\ldots 3\},$$
$$  x_0+x_1+x_2+x_3=0\,.
$$
Then, with the same argument as in \cite[Equation (17)]{Fiorentino} one sees that for any $i\in \{1, \ldots, 10\}$ the tritangent plane $H_i$ is given by the equation

\begin{equation}
\label{eqn:tritangents}
\begin{split}
\frac{D(\chi_i, \xi_2, \xi_3, \xi_4)  }{D(\xi_5, \xi_2, \xi_3, \xi_4)} x_0+&\frac{D( \xi_1, \chi_i,\xi_3, \xi_4)  }{D( \xi_1, \xi_5, \xi_3, \xi_4)} x_1+\\ &\frac{D( \xi_1,\xi_2, \chi_i, \xi_4)  }{D( \xi_1,\xi_2, \xi_5, \xi_4)} x_2+
\frac{D( \xi_1,\xi_2,\xi_3, \chi_i)  }{D( \xi_1,\xi_2,\xi_3, \xi_5)} x_3=0
\end{split}
\end{equation}
in this coordinate system. An analogous formula with $\chi_i$ replaced by $\chi_i'$ gives the tritangent planes $H_i'$.
\par By Lemma \ref{lem:special_characteristics} the set $\xi_1, \ldots, \xi_5$ and all the sets $\xi_1, \ldots, \xi_4, \chi_i$ as well as $\xi_1, \ldots, \xi_4, \chi_i'$ for $i=1, \ldots, 10$ are azygetic and essentially independent. This implies that the Jacobian nullvalue in our equations for $H_i, H_i'$ can be expressed in terms of theta constants via Theorem \ref{ThmJacobi}.
\begin{rem}\label{rem:Dzer}
The denominators in Equation (\ref{eqn:tritangents}) are non-zero for a generic curve. But it can happen that they vanish: such curves must exist because the corresponding locus in the Satake compactification of the Siegel moduli space is given by the vanishing locus of a non-zero polynomial in the theta constants on the (open) Torelli locus. By \cite[Theorem 2.3]{Faltings} such a polynomial is a section of an ample line bundle. Furthermore, since the boundary of the Torelli locus in the Satake compactification has codimension two, this locus must be non-empty. Nevertheless, the authors do not know an explicit example of such a curve.

\end{rem}

\subsection{Finding the quadric}\label{subsec:quad}


In this section, we describe how to recover the quadric $Q$ containing the canonical embedding of $C$. The first step is obtaining the equation for the map $\varphi$ defined in the diagram (\ref{eqn:phi_diagram}). To do this, we construct and solve a linear system as follows.

Recall from Theorem \ref{thm:Milne} that
$$
\varphi(H H')=\lambda \ell^2
$$
for any pair of tritangent planes $(H, H')$ that maps to a bitangent $\ell$ under Milne’s bijection.

\begin{lem}\label{lem:gen_set}
For a generic genus $4$ curve, the elements $H_i H_i'\in V_{C, \eta}$, for $i=1, \ldots, 10$ form a generating set of $V_{C, \eta}$.
\end{lem}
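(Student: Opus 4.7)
The plan is to exploit the splitting $V_{C,\eta} = \ker(\varphi) \oplus \im(\psi)$, which follows from $\varphi \circ \psi = \id$ (Lemma \ref{lem: psirightinv}) together with the one-dimensionality of $\ker(\varphi)$, spanned by the quadric $Q$ cutting out $C$ on $\Gamma_\eta$. By Milne's theorem (Theorem \ref{thm:Milne}) we have $\varphi(H_iH_i') = \lambda_i \ell_i^2$ for nonzero $\lambda_i$, so in this splitting $H_iH_i' = c_i Q + \lambda_i \psi(\ell_i^2)$ for some constants $c_i \in k$. Since $\dim V_{C,\eta} = 7$, proving the lemma reduces to showing that $\dim \lspan\{H_iH_i'\}_{i=1}^{10} = 7$, which via the splitting is equivalent to the conjunction of (a) $\{\lambda_i \ell_i^2\}$ spans the 6-dimensional space $S^2 \HH^0(\Omega_C(\eta))$, and (b) $\lspan\{H_iH_i'\}$ is not contained in $\im(\psi) = W_\eta$.

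For (a), observe that for generic $C$ the associated plane quartic $X$ (produced by the Prym/trigonal construction) is generic, so its 28 bitangents are in sufficiently general linear position in $\mathbb{P}^2 \cong \mathbb{P}(\HH^0(\Omega_C(\eta)))$. The squaring map $\ell \mapsto \ell^2$ is the Veronese embedding $\mathbb{P}^2 \hookrightarrow \mathbb{P}(S^2 \HH^0(\Omega_C(\eta))) \cong \mathbb{P}^5$, whose image is non-degenerate. Hence any 6 bitangent-squares in generic position are linearly independent, and the 10 chosen ones span. This immediately gives $\dim \lspan\{H_iH_i'\} \geq 6$.

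The harder part is (b): we must exhibit at least one $i$ such that $H_iH_i'$ does not vanish at all the nodes of $\Gamma_\eta$ (which in the generic case are four $A_1$ points, all disjoint from $C$). Geometrically this is very plausible, since the tritangent planes $H_i, H_i'$ are cut out by incidence conditions with $C$ and carry no intrinsic relation to the nodes of $\Gamma_\eta$; nevertheless, turning this into a rigorous obstruction requires controlling how those nodes vary over the moduli of $(C,\eta)$. The cleanest way to close the argument is to observe that the rank-$7$ condition on the matrix with rows $H_iH_i'$ is Zariski-open in the moduli of $(C,\eta)$, so it suffices to verify it for a single explicit pair $(C,\eta)$. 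This verification can be carried out numerically using the \Magma{} implementation already developed in this paper, and the main obstacle is really just checking that this open condition is non-empty; no further theoretical input is required.
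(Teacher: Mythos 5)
Your argument ultimately rests on exactly the same principle as the paper's: the rank-$7$ condition is Zariski-open on an irreducible moduli space, so one witness $(C,\eta)$ suffices. But you never produce that witness --- you only assert that the verification ``can be carried out numerically.'' That verification \emph{is} the entire mathematical content of the proof. The paper discharges it in the proof of Lemma \ref{lem:proof_example}: starting from a plane quartic $X$ over $\F_{37}$ with all bitangents rational, it runs Recillas's trigonal construction to obtain an explicit $(C,\eta)$, lists the ten products $H_iH_i'$, and checks that $H_3H_3',\ldots,H_{10}H_{10}'$ are already linearly independent, hence a basis of the $7$-dimensional space $V_{C,\eta}$. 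Without exhibiting and checking such an example, your proof is incomplete.

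The intermediate reduction via the splitting $V_{C,\eta}=\ker(\varphi)\oplus\im(\psi)$ is a reasonable idea but does not actually save you work, and its part (a) is not rigorous as stated: the ten bitangents $\ell_i$ are not arbitrary --- they are the specific ones attached to the characteristics $\chi_i,\chi_i'$ of Lemma \ref{lem:special_characteristics} --- so ``sufficiently general linear position'' of their squares on the Veronese surface is itself an open, nonempty-needing-a-witness condition, not something you get for free from genericity of $X$. In other words, both halves (a) and (b) of your reduction would still require a verified example, at which point you may as well check the rank-$7$ condition directly, which is what the paper does. I would either drop the splitting and go straight to ``open condition plus explicit example,'' or keep it only as motivation, but in either case you must actually construct and verify the example (or cite the paper's $\F_{37}$ computation).
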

\begin{proof}
    Since the property is open in the moduli space and the moduli space is irreducible, it suffices to show that one genus $4$ curve satisfies the condition of the lemma. We will give an example in Example \ref{ex:finite_field}.
\end{proof}
Using the knowledge of the equations of the pairs of tritangent planes $H_i, H_i'$ and the corresponding bitangents $\ell_i$ for $i=1, \ldots, 10$, we know everything in the equations
\begin{equation}\label{eqn:Milne10}
 \varphi(H_i H_i') = \lambda_i \ell_i^2,\ i=1, \ldots, 10
\end{equation}
except for the scalars $\lambda_i$. The linear dependencies satisfied by the $H_i H_i'$ yield a homogeneous linear system of equations with unknowns $\lambda_i$. (Recall that $H_i H_i' \in V_{C,\eta}$ and $\dim(V_{C,\eta})=7$, as mentioned in Section \ref{subsec:2linear}.)
Solving this linear system, we recover the linear map $\varphi$, and thus the quadric $Q$ from $\ker(\varphi)$.
\begin{lem}\label{lem:proof_example}
For a generic genus $4$ curve the vector $(\lambda_1, \ldots, \lambda_{10})$ is uniquely determined from this linear system up to a scalar.
\end{lem}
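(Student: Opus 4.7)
The plan is to argue that the solution space of the linear system is generically one-dimensional by combining an openness argument with the existence of a single explicit example. The geometrically meaningful $(\lambda_1^*,\dots,\lambda_{10}^*)$ arising from Equation~(\ref{eqn:Milne10}) for the actual curve is always a nonzero solution by Theorem~\ref{thm:Milne}, so the solution space has dimension at least one; the task is to show it has dimension at most one generically.

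First I would make the linear system concrete. By Lemma~\ref{lem:gen_set}, for a generic curve the ten vectors $H_i H_i'$ span the 7-dimensional space $V_{C,\eta}$, so there are exactly three independent linear relations $\sum_{i=1}^{10} c_i^{(k)} H_i H_i' = 0$ for $k=1,2,3$. Applying the expected equality $\varphi(H_i H_i') = \lambda_i \ell_i^2$, each such relation turns into a vector equation $\sum_{i=1}^{10} c_i^{(k)} \lambda_i \ell_i^2 = 0$ in the 6-dimensional space $S^2 \HH^0(\Omega_C(\eta))$. This produces a homogeneous $18\times 10$ linear system $M\lambda = 0$, whose coefficient matrix $M$ depends algebraically on the data attached to $(C,\eta)$. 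The goal is to show $\rk(M)=9$.

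The upper bound $\rk(M)\leqslant 9$ is automatic from the existence of the nonzero solution $\lambda^*$. For the matching lower bound, the condition $\rk(M)\geqslant 9$ is the non-vanishing of at least one $9\times 9$ minor of $M$, hence a Zariski open condition on the parameter space of pairs $(C,\eta)$ with $C$ a smooth non-hyperelliptic genus 4 curve and $\eta\in\Jac(C)[2]\setminus\{0\}$. This parameter space is irreducible, since $\mathcal{M}_4$ is irreducible and the symplectic group $\Sp(8,\F_2)$ acts transitively on the non-zero elements of $\F_2^8$, so the corresponding étale cover of $\mathcal{M}_4$ is connected. Hence it suffices to exhibit a single pair $(C_0,\eta_0)$ for which $\rk(M)=9$.

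The main obstacle is producing such an example, which is computational rather than formal. I would pick a specific genus 4 curve over $\mathbb{Q}$ and a non-trivial $\eta_0$, compute its period matrix and the relevant theta constants, construct the tritangent planes $H_i, H_i'$ via Equation~(\ref{eqn:tritangents}) and Theorem~\ref{ThmJacobi}, construct the bitangents $\ell_i$ of the associated plane quartic via the Aronhold--Weber and Schottky--Jung formulas, and then directly verify that (a) $\dim \lspan\{H_i H_i' : i=1,\dots,10\} = 7$ and (b) the resulting matrix $M$ has rank $9$. Both conditions reduce to the non-vanishing of explicit minors, so a computation with sufficient numerical precision is conclusive. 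Such a verification, carried out with the \textsf{Magma} implementation cited in the introduction, simultaneously establishes Lemma~\ref{lem:gen_set} and Lemma~\ref{lem:proof_example}.
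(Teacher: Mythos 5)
Your proposal is correct and follows essentially the same route as the paper: reduce to a Zariski-open condition on an irreducible parameter space and verify a single example in which the ten products span a $7$-dimensional space and the resulting homogeneous system has a one-dimensional kernel. The only difference is implementation: the paper builds its example exactly over $\F_{37}$ (starting from a plane quartic with rational bitangents and applying Recillas's trigonal construction), which avoids the certified-precision issues your numerical verification over $\QQ$ would raise.
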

\begin{proof}
With the same argument as in Lemma \ref{lem:gen_set} it suffices to check that it holds for Example \ref{ex:finite_field}.
\end{proof}

\subsection{Finding the cubic symmetroid $\Gamma_\eta$}\label{sec:cubic}
In this section, we explain how the knowledge of the map
$$\varphi: V_{C, \eta} \longrightarrow S^2\HH^0(C, \Omega_C(\eta))$$
can be used to recover the equations of the curve $C$. As we know that $\varphi$ recovers $Q$, it remains to explain the calculation of a cubic containing $C$.
We begin by giving a connection between the natural right-inverse $\psi$ of $\varphi$ and the Cayley cubic $\Gamma_\eta$. For this purpose, we will introduce a sextic form $G$ which vanishes with multiplicity two on $\Gamma_\eta$. In this definition we use the abbreviations $V=\HH^0(C, \Omega_C),\, W=\HH^0(C, \Omega_C (\eta))$.
\par 
    Let $G$ be the homogeneous form that is given by the following composition of polynomial maps
\begin{equation} \label{eqn: defG}
    V^\vee \rightarrow S^2(V^\vee) \cong (S^2 V)^\vee \rightarrow V_{C, \eta}^\vee \stackrel{\psi^\vee}{\longrightarrow} (S^2 W)^\vee \cong S^2(W^\vee) \stackrel{\text{disc}}{\longrightarrow} k \, ,
\end{equation}
where the first map is $x\mapsto x\otimes x$ and the map $(S^2 V)^\vee \rightarrow V_{C, \eta}^\vee$ is dual to the defining inclusion.
\par Then $G$ has degree $6$ because the first map has degree $2$, the map $\text{disc}$ has degree $\dim(W)=3$, and all the other maps are linear.

\begin{rem}
    The intuition behind considering the composition
    $$V^\vee \rightarrow S^2(V^\vee) \cong (S^2 V)^\vee \rightarrow V_{C, \eta}^\vee \stackrel{\psi^\vee}{\longrightarrow} (S^2 W)^\vee \cong S^2(W^\vee)$$
    is as follows: we want to understand the image $\Gamma_\eta$ of the map
    $$c: \mathbb{P}^2 \dashrightarrow \mathbb{P}^3\,.$$
    But we do not have access to the map $c$ explicitly; we only have the map $\varphi$ which is linked to the way $c$ transforms quadrics. The best we can do starting from a point $x\in \mathbb{P}^3$ is to form the vector subspace of quadrics vanishing at $x$ and, then, apply the map $\varphi$. This gives us a linear system of conics on $\mathbb{P}^2$ from which we can hope to detect whether or not $x$ is in the image of $c$. Dualizing this consideration shows that it is natural to study the composition
    $$V^\vee \rightarrow S^2(V^\vee) \cong (S^2 V)^\vee \rightarrow V_{C, \eta}^\vee \stackrel{\psi^\vee}{\longrightarrow} (S^2 W)^\vee \cong S^2(W^\vee)\,. $$
\end{rem}
\par The next proposition will show that the homogeneous form $G$ is the square of a cubic form cutting out the cubic symmetroid.
\begin{prop} \label{prop:cubic_squared}
    One has $\mathcal{V}(G)=2 \Gamma_\eta$.
\end{prop}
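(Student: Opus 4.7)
The plan is to show that $G = c \cdot F_3^2$ for some nonzero scalar $c \in k$, where $F_3 \in S^3 V$ is a cubic form defining $\Gamma_\eta$. After choosing a basis $w_1, w_2, w_3$ of $W = \HH^0(C, \Omega_C(\eta))$, $G(x)$ unwinds to the determinant of the symmetric $3\times 3$ matrix $M(x)$ with entries
\[ M(x)_{ij} = \psi(w_i w_j)(x). \]
Each entry is a quadratic form in $x \in V^\vee$, so $G$ is a homogeneous form of degree $6$ on $V^\vee$.

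The first key step is to show that $M(x)$ has rank at most $1$ for every $x \in \Gamma_\eta$. Because the image of $c\colon \mathbb{P}^2 \dashrightarrow \Gamma_\eta$ is dense in $\Gamma_\eta$ and the rank-at-most-one locus is Zariski closed, it will suffice to treat $x = c(p)$ with $p$ in the domain of $c$. The geometric interpretation of $\psi$ recalled in Section~\ref{subsec:2linear}, namely that $\mathcal{V}(\psi(q)) \cap \Gamma_\eta = c(\mathcal{V}(q))$ for every $q \in S^2 W$, implies that the linear functionals $q \mapsto \psi(q)(x)$ and $q \mapsto q(p)$ on $S^2 W$ share the same hyperplane kernel; since both are nonzero for generic $x \in \Gamma_\eta$ (the base locus of $W_\eta$ in $\Gamma_\eta$ consists only of the nodes), they differ by a nonzero scalar. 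Under the canonical isomorphism $(S^2 W)^\vee \cong S^2(W^\vee)$, the functional $q \mapsto q(p)$ corresponds to the rank-one symmetric tensor $p \otimes p$, so $M(x)$ has rank $\leq 1$.

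Next I would upgrade this to the claim $F_3^2 \mid G$ in $k[x_0, \ldots, x_3]$. This reduces to the following local fact: if a symmetric $3\times 3$ matrix of regular functions $M$ restricts to rank $\leq 1$ along a smooth irreducible hypersurface $\{z=0\}$, then $z^2 \mid \det M$. To prove it, at any smooth point I would analytically change frame so that $M_0 := M|_{z=0}$ becomes $\operatorname{diag}(a_0, 0, 0)$, and then expand $M = M_0 + z M_1 + O(z^2)$ and apply Jacobi's formula:
\[ \det(M) = \det(M_0) + z\cdot \operatorname{tr}\bigl(\operatorname{adj}(M_0)\,M_1\bigr) + O(z^2). \]
Both leading terms vanish, since $\operatorname{adj}$ of any symmetric $3 \times 3$ matrix of rank $\leq 1$ is zero (a special feature of size $3$). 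Applied at a smooth point of $\Gamma_\eta$, this shows that $G$ has valuation $\geq 2$ at the generic point of $\Gamma_\eta$ in the DVR $k[x_0, \ldots, x_3]_{(F_3)}$; unique factorization in the polynomial ring then gives $F_3^2 \mid G$.

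A degree count ($\deg G = 6 = 2 \deg F_3$) then forces $G = c\,F_3^2$ for some $c \in k$, and only $c \neq 0$ remains. Nonvanishing of $c$ is an open condition on $(C, \eta)$, so I would verify it in a single explicit example, such as the curve over $\mathbb{F}_{37}$ from the proof of Lemma~\ref{lem:proof_example}, and propagate to generic $(C, \eta)$ by irreducibility of moduli. Conceptually, the main obstacle is the first key step: recognizing $M(x)$ as a rank-one tensor along the dense open subset $c(\mathbb{P}^2) \subset \Gamma_\eta$. This hinges on the fact that $\psi$ is a \emph{geometric} right-inverse of $\varphi$ (via $\mathcal{V}(\psi(q)) \cap \Gamma_\eta = c(\mathcal{V}(q))$), not just a linear one, and in particular the argument breaks down in the bielliptic case — consistent with the generic hypothesis in force throughout this section.
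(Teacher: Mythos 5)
Your argument is essentially the paper's: the heart of both proofs is the observation that for $x=c(p)$ the functional $q\mapsto\psi(q)(x)$ kills every quadric through $p$ and is therefore a (possibly zero) multiple of the rank-one tensor $p\otimes p$, and the upgrade from vanishing to multiplicity two rests on the same adjugate phenomenon for symmetric $3\times3$ matrices of rank $\leq 1$. Where the paper simply cites the discussion preceding Theorem 4.1.4 of Dolgachev (deducing $\gamma_\eta^3\mid G^2$ and hence $\gamma_\eta^2\mid G$ by irreducibility), you make this step self-contained via Jacobi's formula and a first-order expansion along $\Gamma_\eta$; that is a clean and correct substitute, and in fact the explicit frame change to $\mathrm{diag}(a_0,0,0)$ is not even needed, since $\det(M_0)=0$ and $\mathrm{adj}(M_0)=0$ pointwise already kill the zeroth- and first-order terms. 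The two places where you genuinely diverge are weaker than the paper. First, for $G\neq 0$ you specialize to one example and spread out by irreducibility of moduli; this only proves the proposition for generic $(C,\eta)$, whereas the paper restricts $G$ to a hyperplane section $\Gamma_\eta\cap H$ that is a smooth plane cubic and invokes Dolgachev's theory of determinantal representations (where $G|_H$ is the nonzero form $\det(N)$), which works for every non-bielliptic pair. Second, you discard the bielliptic case on the grounds of a ``generic hypothesis in force,'' but the proposition is stated without such a hypothesis and the paper does prove it there, by noting that $\psi$ sends a conic to the cone over it, so that $\mathcal{V}(G)$ is a cone with the same vertex as $\Gamma_\eta$ and the statement reduces to the symmetric determinantal representation of the plane cubic $E$. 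Neither omission affects the use of the proposition in the main (generic) reconstruction theorem, but as a proof of the statement as written your version covers strictly less.
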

\begin{proof}
Assume first that $(C, \eta)$ is not bielliptic. We begin by showing that $G$ vanishes on $\Gamma_\eta$. Let $U\subset \Gamma_\eta$ be the complement of the base locus of the birational map $c^{-1}: \Gamma_\eta \dashrightarrow \mathbb{P}^2$. We will show that $G$ vanishes on the open subset $U$. To that end, let $x\in U$ be arbitrary and let $y=c^{-1}(x) \in \mathbb{P}^2$ denote its image. Then $x$ (resp., $y$) gives a non-zero vector in $V^\vee$ (resp., $W^\vee$). We claim that under the composition
$$
\gamma: V^\vee \rightarrow S^2(V^\vee) \cong (S^2 V)^\vee \rightarrow V_{C, \eta}^\vee \stackrel{\psi^\vee}{\longrightarrow} (S^2 W)^\vee \cong S^2(W^\vee) \, ,
$$
$x$ maps to a multiple of $y\otimes y$.
\par Since $W_\eta$ is the image of $\psi$, the map $\gamma$ factors as
$$
V^\vee \rightarrow S^2(V^\vee) \cong (S^2 V)^\vee \rightarrow W_{\eta}^\vee \stackrel{\psi^\vee}{\longrightarrow} (S^2 W)^\vee \cong S^2(W^\vee)\,.
$$
Consider now any quadratic form $q\in S^2 W$ vanishing at $y$. We denote by $\langle \cdot, \cdot \rangle_W$  the natural pairing between $S^2 W$ and $S^2(W^\vee)$ and define $\langle \cdot, \cdot \rangle_V$ similarly. Then we see that
\begin{equation}\label{eqn: pairing}
\langle q, \gamma(x) \rangle_W = \langle q, \psi^\vee(x\otimes x) \rangle_W =\langle \psi(q), x\otimes x\rangle_V \,.
\end{equation}
Next, we know that the map $\psi$ is given by pull-pushing quadrics through the diagram
$$
\begin{tikzcd}
& S\arrow{dr}\arrow{dl}\\
\mathbb{P}(\HH^0(C, \Omega_C (\eta)))\cong \mathbb{P}^2 \arrow[rr, dashrightarrow, "c"]&&\Gamma_\eta \hspace{0.5em},
\end{tikzcd}
$$
i.e., the quadratic form $\psi(q)$ is the unique (up to scalar) quadratic form vanishing on the curve $c(\mathcal{V}(q))\subset \Gamma_\eta$.
\par Since $y=c^{-1}(x)$ and $q$ vanishes at $y$, this implies that $\psi(q)$ vanishes at $x$, i.e.,
$$\langle \psi(q), x\otimes x\rangle_V=0\,.$$
By \ref{eqn: pairing} $\gamma(x)$ must therefore be orthogonal to all the quadratic forms vanishing at $y$ and thus is a multiple of $y\otimes y$. (It could a priori be zero.) This proves the claim.
\par Next, the observation $\text{disc}(y\otimes y)=0$ shows that $G$ vanishes on $\Gamma_\eta$. To prove that $G$ vanishes with multiplicity two, one can use the same argument with adjugate matrices as in the discussion preceding Theorem 4.1.4 in \cite{Dolgachev} to show that $\gamma_\eta^3 \mid G^2 $ where $\gamma_\eta$ is a cubic form cutting out $\Gamma_\eta$. This implies that $\gamma_\eta^2 \mid G$ because $\Gamma_\eta$ is irreducible.
\par It remains to show that $G\neq 0$. One possible argument would be to go through the classification of symmetroid cubic surfaces on \cite[\S1, p. 33]{Catanese} and explicitly verify that $G \neq 0$ in every case. Alternatively, we propose the following general proof. Choose a plane $H\subset \mathbb{P}^3$ such that $\Gamma_\eta \cap H$ is smooth. Then the restriction $\gamma_{\eta_{|H}}$ gives a determinantal equation for the smooth plane curve $\Gamma_\eta \cap H$. Hence, the theory of determinantal equations from Dolgachev’s book \cite[Section 4.1.2]{Dolgachev} applies. The degree six form $G_{|H}$ appears as a special case of the construction preceding \cite[Theorem 4.1.4]{Dolgachev} (where it is denoted by $\det(N)$). In loc.\ cit. he proves that this polynomial is non-zero. Therefore, we conclude that $G_{|H}\neq 0$ and thus $G\neq 0$. This proves the non-bielliptic case.
\par Assume now that $(C, \eta)$ is bielliptic. Then $\Gamma_\eta$ is a cone over a smooth plane cubic $E\subset \mathbb{P}^2$. Furthermore, after applying a coordinate transformation that maps the vertex to $(0:0:0:1)$ it is easy to see that $\mathcal{V}(G)$ is a cone over the same vertex by looking at the resulting equation. (This follows from the fact that in the bielliptic case, $\psi$ maps a conic to the cone over it, as mentioned in Section \ref{sec:bielliptic}.) Thus we are reduced to a statement about the symmetric determinantal representation of $E$ in $\mathbb{P}^2$. Then Dolgachev's argument applies directly and the proposition follows.
\end{proof}
In order to apply the proposition we need to know the map $\psi$. So far we have explained how to reconstruct the map $\varphi$. Now, the map $\psi$ is characterized by being the right inverse of $\varphi$ with image $W_\eta$. Therefore, it suffices to recover the linear subspace $W_\eta \subset S^2V$. For that purpose, we have the following lemma:
\begin{lem}\label{lem:invtrick}
    Let $W_\eta \subset S^2V$ be as before. Then the orthogonal complement $W_\eta^\perp \subset S^2V^\vee$ is closed under the binary operation
    $$S^2V^\vee\times S^2V^\vee \rightarrow S^2 V^\vee:\, (Q_1, Q_2)\mapsto (Q_1^\sharp+Q_2^\sharp)^\sharp$$
    where $(\cdot)^\sharp:S^2V^\vee \rightarrow S^2 V$ corresponds to the map that sends a quadric to its dual.  (In coordinates, the map $(\cdot)^\sharp$ is given by the cofactor matrix.)
\end{lem}
\begin{proof}
    The cubic symmetroids that satisfy the conclusion of the lemma form a closed subset of the parameter space of all symmetroids. Therefore, it suffices to consider the case where $\Gamma_\eta$ is a Cayley cubic. We may choose coordinates such that $\Gamma_\eta$ is given by $x_0x_1x_2x_3 \sum_{i=0}^3 \frac{1}{x_i}$. In this case $W_\eta^\perp$ consists of those quadratic forms whose Gram matrix is diagonal. Since $(\cdot)^\sharp$ is given by taking the cofactor matrix, the lemma follows.
\end{proof}
\begin{rem}
    Strictly speaking, it is more natural to view $(\cdot)^\sharp$ as a map from $S^2V^\vee$ to $S^2V \otimes (\bigwedge^4V^\vee)^{\otimes 2}$. However, since $(\bigwedge^4V^\vee)$ is one-dimensional, this is not important for the lemma.
\end{rem}

\section{Explicit formulas} \label{subsec:main}

\subsection{The formulas} \label{subsec:formulas}
We now give explicit formulas for the equations of a generic genus four curve in terms of its theta constants. We begin with a summary of our method, and then give more detailed explanations of the individual steps.

\begin{overview}\label{alg}
Starting with the theta constants $\thetacha{C}{\delta}{\varepsilon}$ of a generic smooth genus $4$ curve $C$ defined over $k$, we proceed as follows.
\begin{enumerate}
\item Choose a 2-torsion point $\eta\in \Jac(C)[2]$. Pick a $2$-level structure so that $\eta$ is identified with $\frac{1}{2}\begin{bmatrix}
 0 & 0 &0 & 0\\
 1 & 0 &0 & 0
\end{bmatrix}$.\\
\item Compute the tritangent planes $H_i, H_i'$ from the Equations (\ref{eqn:tritangents}).
\item Compute the theta constants for the plane quartic $X$ via the Schottky--Jung relations
$$\thetacha{X}{\delta}{\varepsilon}^2=\thetacha{C}{0 & \delta}{0 & \varepsilon} \thetacha{C}{0 & \delta}{1 & \varepsilon}\,. $$
\item Compute the 10 bitangent lines $\ell_i$ via the Aronhold-Weber formulas (\Cref{subsec:aronholdweber}) from the theta constants $\thetacha{X}{\delta}{\varepsilon}$.
\item From the Equations (\ref{eqn:Milne10})
$$\varphi(H_i H_i')=\lambda_i \ell_i^2$$
we get a homogeneous linear system of equations that we can solve for the $\lambda_i$.\\
\item From the $\lambda_i$ compute a matrix for the linear map $\varphi$.\\
\item Compute a generator for $\ker(\varphi)$; call it $Q$.\\
\item Use Lemma \ref{lem:invtrick} to compute $W_\eta$.\\
\item We then compute $\psi$ as the unique right inverse of $\varphi$ with image $W_\eta$.\\
\item From $\psi$ compute the degree six polynomial $G$ as defined in the beginning of Section \ref{sec:cubic}. Compute a square root $\Gamma_\eta$ of $G$.\\
\item We obtain the desired curve $C$ as the intersection of $Q$ and $\Gamma_\eta$.

\end{enumerate}
\end{overview}

We will now make the various steps explicit, beginning with (2). As in \Cref{subsec:aronholdweber} we use the convention that
$
     \thetacha{C}{\delta_0 & \delta_1 & \delta_2 & \delta_3 }{\varepsilon_0 & \varepsilon_1 & \varepsilon_2 & \varepsilon_3} = \vartheta_{C,i}$, where 
$$i
  = 2(\varepsilon_3 + 2\varepsilon_2 + 4\varepsilon_{1} + 8\varepsilon_0)
  + 2^{5}(\delta_3 + 2 \delta_2 + 4\delta_1 + 8\delta_0).
$$

Given the two-torsion point
$\eta=\frac{1}{2}\begin{bmatrix}
    0 & 0 & 0 & 0\\
    1 & 0 & 0 & 0
\end{bmatrix}$,
recall that in the proof of \Cref{lem:special_characteristics} we chose the following theta characteristics of the genus 4 curve $C$ that we want to reconstruct.

\begin{align*}
\xi_1 &=\frac{1}{2}\begin{bmatrix} 1 & 1 & 1 & 0 \\ 1 & 1 &  1 & 0 \end{bmatrix}, \quad
\xi_2 =\frac{1}{2}\begin{bmatrix} 1 & 0 & 1 & 0 \\ 0 & 0 &  1 & 0 \end{bmatrix}, \quad
\xi_3 =\frac{1}{2}\begin{bmatrix} 1 & 1 & 1 & 0 \\ 0 & 0 &  1 & 0 \end{bmatrix},\\
\xi_4 &=\frac{1}{2}\begin{bmatrix} 1 & 0 & 1 & 0 \\ 0 & 1 &  1 & 0 \end{bmatrix}, \quad
\xi_5 =\frac{1}{2}\begin{bmatrix} 0 & 1 & 1 & 0 \\ 0 & 1 &  0 & 0 \end{bmatrix},
\end{align*}
and the $\chi_i$:
\begin{align*}
\chi_1 &=\frac{1}{2}\begin{bmatrix} 0 & 1 & 1 & 1 \\ 0 & 1 &  1 & 1 \end{bmatrix}, \quad
\chi_2 =\frac{1}{2}\begin{bmatrix} 0 & 1 & 0 & 1 \\ 0 & 1 &  0 & 0 \end{bmatrix},\quad
\chi_3 =\frac{1}{2}\begin{bmatrix} 0 & 1 & 0 & 0 \\ 0 & 1 &  0 & 1 \end{bmatrix},\\
\chi_4 &=\frac{1}{2}\begin{bmatrix} 0 & 1 & 0 & 0 \\ 0 & 1 &  1 & 0 \end{bmatrix},\quad
\chi_5 =\frac{1}{2}\begin{bmatrix} 0 & 1 & 1 & 0 \\ 0 & 1 &  0 & 0 \end{bmatrix}, \quad 
\chi_6 =  \frac{1}{2}\begin{bmatrix} 0 & 1 & 1 & 0 \\ 0 & 1 &  0 & 1 \end{bmatrix},\\
\chi_7 &=\frac{1}{2}\begin{bmatrix} 0 & 1 & 0 & 0 \\ 0 & 1 &  1 & 1 \end{bmatrix}, \quad
\chi_8 =\frac{1}{2}\begin{bmatrix} 0 & 1 & 0 & 1 \\ 0 & 1 &  1 & 0 \end{bmatrix}, \quad
\chi_9 =\frac{1}{2}\begin{bmatrix} 0 & 1 & 1 & 1 \\ 0 & 1 &  0 & 0 \end{bmatrix}, \\
\chi_{10} &=\frac{1}{2}\begin{bmatrix} 0 & 1 & 0 & 0 \\ 0 & 1 &  0 & 0 \end{bmatrix}
\end{align*}
and $\chi_i' = \chi_i +\eta$ for $i \in \{1,\ldots,10\}$.

The tritangent planes $H_i, H_i'$ correspond to the odd theta characteristics $\chi_i, \chi_i'$. We have shown in \Cref{sub:oddtheta} that for any $i\in \{1, \ldots, 10\}$ the tritangent plane $H_i$ is given by the equation

\begin{equation}
\label{eqn:tritangents2}
\begin{split}
\frac{D(\chi_i, \xi_2, \xi_3, \xi_4)  }{D(\xi_5, \xi_2, \xi_3, \xi_4)} x_0+&\frac{D( \xi_1, \chi_i,\xi_3, \xi_4)  }{D( \xi_1, \xi_5, \xi_3, \xi_4)} x_1+\\ &\frac{D( \xi_1,\xi_2, \chi_i, \xi_4)  }{D( \xi_1,\xi_2, \xi_5, \xi_4)} x_2+
\frac{D( \xi_1,\xi_2,\xi_3, \chi_i)  }{D( \xi_1,\xi_2,\xi_3, \xi_5)} x_3=0.
\end{split}
\end{equation}
The analogous formula with $\chi_i$ replaced by $\chi_i'$ gives the tritangent planes $H_i'$.

All $D(\alpha, \beta, \gamma, \delta)$ can be explicitly written in terms of theta constants (\Cref{lem:special_characteristics}, \Cref{ThmJacobi}). One, for example, has
\begin{equation}
    \begin{aligned}
        D(\xi_5, \xi_2, \xi_3, \xi_4) &= - \vartheta_{C,215}  \vartheta_{C,125}\vartheta_{C, 213}\vartheta_{C,126}\vartheta_{C,236}\vartheta_{C, 111}\\
        & \qquad + \vartheta_{C,85}\vartheta_{C,255}\vartheta_{C,87}\vartheta_{C,252}\vartheta_{C,110}\vartheta_{C,237}
    \end{aligned}
\end{equation}
and the other expressions are similar.

We can now write
 $$H_i=\sum_{j=0}^3 h_{i,j} x_j,\, H_i'=\sum_{j=0}^3 h_{i,j}' x_j,\quad i=1,\ldots, 10.$$
Next we proceed with step (3). To every pair of theta characteristics 

$$\chi_i =\frac{1}{2}\begin{bmatrix}
    0 & \delta_1 & \delta_2 & \delta_3\\
    0 & \varepsilon_1 & \varepsilon_2 & \varepsilon_3
\end{bmatrix}, \quad \chi_i' =\frac{1}{2}\begin{bmatrix}
    0 & \delta_1 & \delta_2 & \delta_3\\
    1 & \varepsilon_1 & \varepsilon_2 & \varepsilon_3
\end{bmatrix}$$ we can associate a theta characteristic $\zeta_i =\frac{1}{2}\begin{bmatrix}
    \delta_1 & \delta_2 & \delta_3\\
    \varepsilon_1 & \varepsilon_2 & \varepsilon_3
\end{bmatrix}$ for the genus 3 curve $X$. Each of the $\zeta_i$ has an associated bitangent line \begin{equation}\label{eqn:bitanlines}
\ell_i=\sum_{i=0}^2 b_{i,j} y_j
\end{equation}
that we can compute via the Aronhold--Weber formulas from the theta constants $\thetacha{X}{\delta}{\varepsilon}$ (see \Cref{subsec:aronholdweber}). 
Here we use that \cite[Theorem 1]{FarkasRauch}
$$\thetacha{X}{\delta}{\varepsilon}=\sqrt{\thetacha{C}{0 & \delta}{0 & \varepsilon} \thetacha{C}{0 & \delta}{1 & \varepsilon}}\,. $$

Using the same notation as in \Cref{eq:explicitbitangents}, the ten bitangents are then given by
\begin{equation}
\label{eq:tenbitangents}
\begin{aligned}
  &\ell_1=y_0,\quad \ell_2=y_0+y_1+y_2,\quad \ell_3=a_{10}y_0+a_{11}y_1+a_{12}y_2,\\
  &\ell_4=u_1,\quad \ell_5=u_2,\quad \ell_6=y_0+y_1+u_2, \quad  \ell_7=y_0+y_2+u_1,\quad\\
  &\ell_8=\frac{u_1}{a_{11}} + k_1 (a_{10}y_0 + a_{12}y_2),\quad
\ell_9=\frac{u_2}{a_{12}} + k_1 (a_{10}y_0 + a_{11}y_1),\\
&\ell_{10}=\frac{u_0}{1-k_1 a_{11}a_{12}} + \frac{u_1}{1-k_1 a_{10}a_{12}} +\frac{u_2}{1-k_1 a_{10}a_{11}}\,.
    \end{aligned}
\end{equation}

In step (4) we want to get a homogeneous linear system of equations for the $\lambda_i$ from the Equations (\ref{eqn:Milne10})
$\varphi(H_i H_i')=\lambda_i \ell_i^2$. For this purpose, we first find the linear relations between the $H_iH_i'$. Thus, we form the matrix $H^\square$ whose columns are the entries of the symmetric matrices corresponding to $H_i H_i',\, i=1, \ldots, 10$, and compute its kernel. We have
$$  H^\square := \frac{1}{2}\begin{pmatrix}
     2 h_{1,0}h_{1,0}' &\quad \cdots\quad & 2 h_{10,0}h_{10,0}'\\
     h_{1,0} h_{1,1}'+h_{1,1} h_{1,0}' &\, \cdots\, &h_{10,0} h_{10,1}'+h_{10,1} h_{10,0}'\\
     h_{1,0} h_{1,2}'+h_{1,2} h_{1,0}'&\, \cdots\,   &h_{10,0} h_{10,2}'+h_{10,2} h_{10,0}'\\
     h_{1,0} h_{1,3}'+h_{1,3} h_{1,0}' & \, \cdots\,  &h_{10,0} h_{10,3}'+h_{10,3} h_{10,0}'\\
     2 h_{1,1} h_{1,1}' & \, \cdots\,   &2 h_{10,1} h_{10,1}'\\
     h_{1,1} h_{1,2}'+h_{1,2} h_{1,1}' &\, \cdots\, & h_{10,1} h_{10,2}'+h_{10,2} h_{10,1}'\\
     h_{1,1} h_{1,3}'+h_{1,3} h_{1,1}' &\, \cdots\,  &h_{10,1} h_{10,3}'+h_{10,3} h_{10,1}'\\
     2 h_{1,2} h_{1,2}'& \, \cdots\,   & 2 h_{10,2} h_{10,2}'\\
     h_{1,2} h_{1,3}'+h_{1,3} h_{1,2}'&\, \cdots\,  & h_{10,2} h_{10,3}'+h_{10,3} h_{10,2}'\\
     2 h_{1,3} h_{1,3}'&\, \cdots\,  & 2 h_{10,3} h_{10,3}'&

\end{pmatrix}
$$
and the kernel has dimension 3, as we saw in Section \ref{subsec:2linear} that the span of the $H_iH_i'$ has dimension 7.
Write 
$$
\lspan(n_1, n_2, n_3):= \ker(H^\square) \, .
$$
Next, we write down the matrix $L$ whose columns are the entries of the symmetric matrices corresponding to the $\ell_i^2$ 
$$  L := \begin{pmatrix}
     b_{1,0}^2 & \quad \cdots \quad  & b_{10,0}^2\\
     b_{1,0} b_{1,1}& \cdots  & b_{10,0} b_{10,1}\\
     b_{1,0} b_{1,2}& \cdots  &b_{10,0} b_{10,2}\\
     b_{1,1}^2 & \cdots  &b_{10,1}^2\\
     b_{1,1} b_{1,2} &\cdots & b_{10,1} b_{10,2}\\
     b_{1,2}^2& \cdots  & b_{10,2}^2\\
\end{pmatrix}
$$
where the $b_{i,j}$ are as in \ref{eqn:bitanlines}.
The $\lambda_i$ are then found by solving the linear system of 
equations
$$ \lspan(\lambda):= \ker\begin{pmatrix}
    L \diag(n_1)\\
    L \diag(n_2)\\
    L \diag(n_3)
\end{pmatrix} $$
because by linearity, any relation between the $H_iH_i'$ is sent to zero under $\varphi$. Thus the condition $\varphi(H_i H_i')=\lambda_i \ell_i^2$ leads to the linear system as above.

(5): From the $\lambda_i$ compute a matrix for the linear map $\varphi$ using the basis $H_1 H_1',\ldots, H_{7} H_7'$ on $V_{C, \eta}$. (Generically, the $H_iH_i'$ form a basis; if not, it is clear what one needs to change).\\
$$M_\varphi := L \begin{pmatrix}
    \lambda_1 & 0 & 0 &0 &0 & 0 & 0 \\
    0 & \lambda_2 & 0 & 0 &0 & 0 & 0\\
    0 & 0 & \lambda_3 & 0 & 0 &0 & 0\\
    0 & 0 & 0 & \lambda_4 & 0 & 0 &0\\
    0 & 0 & 0 & 0 & \lambda_5 & 0 & 0\\
    0 & 0 & 0 &  0 &0 & \lambda_6 & 0\\
    0 & 0 &0 &0 &0 &0 & \lambda_7\\
    0 & 0 &0 &0 &0 &0 &0\\
    0 & 0 &0 &0 &0 &0 &0\\
    0 & 0 &0 &0 &0 &0 &0\\
\end{pmatrix}$$

(6): A generator for $\ker(\varphi)$ will give us a quadratic form $Q$ vanishing on the curve $C$. Indeed, if we write $\lspan(v):= \ker(M_\varphi)$, then an equation of $Q$ will be given by
$$Q := \sum_{i=1}^7 v_i H_i H_i'.$$

(7): We will now use the matrix $M_\varphi$ we computed to find a Cayley cubic $\Gamma_\eta$ cutting out the curve $C$. For this, we describe how to make Lemma \ref{lem:invtrick} explicit. We first compute a basis for $V_{C, \eta}^\perp \subset S^2V^\vee$, that is
$$\lspan(c_1, c_2, c_3):= \ker((H^\square)^t),$$ and write down the matrices $Q_1, Q_2$ corresponding to $c_1$ and $c_2$:

$$ Q_1:=\begin{pmatrix}
    2c_{1,1} & c_{1,2} & c_{1,3} & c_{1,4}\\
    c_{1,2} & 2c_{1,5} & c_{1,6} & c_{1,7}\\
    c_{1,3} &c_{1,6}& 2c_{1,8} & c_{1,9}\\
    c_{1,4} &c_{1,7}&c_{1,9}& 2c_{1,10}
\end{pmatrix},\,  Q_2:=\begin{pmatrix}
    2c_{2,1} & c_{2,2} & c_{2,3} & c_{2,4}\\
    c_{2,2} & 2c_{2,5} & c_{2,6} & c_{2,7}\\
    c_{2,3} &c_{2,6}& 2c_{2,8} & c_{2,9}\\
    c_{2,4} &c_{2,7}&c_{2,9}& 2c_{2,10}
\end{pmatrix}.$$
Then, by Lemma \ref{lem:invtrick}, the symmetric matrix
$$Q':=(Q_1^{-1}+Q_2^{-1})^{-1}$$
defines an element of $W_\eta^\perp$, which for a generic curve will generate $W_\eta^\perp$ together with $V_{C, \eta}^\perp$.
\par (8): Now we can compute a matrix for $\psi$ by noticing that it is characterized by being a right inverse of $\varphi$ and having image $W_\eta$:
$$q':=\begin{pmatrix} Q'_{11} &Q'_{12} &Q'_{13} &Q'_{14} &Q'_{22} &Q'_{23} &Q'_{33} &Q'_{34} &Q'_{44} \end{pmatrix}$$
$$M_{\psi} := \begin{pmatrix}
    M_\varphi\\
    q' H^\square
\end{pmatrix}^{-1}\begin{pmatrix}
    \mathsf{1}_6\\
    0_{1\times6}
\end{pmatrix}$$
(9): Finally, we can use $M_\psi$ to compute the degree six polynomial $G$ from the beginning of Section \ref{sec:cubic}. Let $d$ be the form given as an element of $\Mat_{6,1}(k[x_0, x_1, x_2, x_3])$ by
$$d := M_\psi \begin{pmatrix}
    H_1 H_1' & H_2 H_2' & H_3 H_3' & H_4 H_4' & H_5 H_5' & H_6 H_6' & H_7 H_7'
\end{pmatrix}^t,$$
and write $\Delta$ for its associated matrix:
$$\Delta := \begin{pmatrix}
d_1 & d_2 & d_3\\
d_2 & d_4 & d_5\\
d_3 & d_5 & d_6
\end{pmatrix}.$$  
Now $G$ is the degree 6 form given as 
$G=\det(\Delta)$ and we find the cubic $$\Gamma_\eta=\sqrt{G}$$ which, by construction, will be equal to zero on $C$. Now we have explicitly written down $Q$ and $\Gamma_\eta$ such that
$$Q \cap \Gamma_\eta = C.$$

Collecting the formulas given above, we have proven the following theorem.
\begin{thm} \label{thm:main_result}
Let $C$ be a generic genus $4$ curve over an algebraically closed field $k$ of characteristic 0 or of characteristic $p$ with $p$ large enough. The explicit formulas above give the equations of $C$ as an intersection of a quadric and a Cayley cubic starting from the theta constants. They use only the following algebraic operations: Elementary arithmetic, taking square roots, and solving linear systems of equations.
\end{thm}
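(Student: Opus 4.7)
My proof plan assembles the three explicit ingredients from the preceding sections into a single reconstruction algorithm that uses only linear algebra and square-root extractions.

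First I would fix a non-trivial $\eta \in \Jac(C)[2]$ (normalized without loss of generality as in Lemma~\ref{lem:special_characteristics}) and invoke that lemma to produce the auxiliary system $\xi_1,\ldots,\xi_5$ together with the ten pairs $(\chi_i,\chi_i')$ differing by $\eta$. In the coordinate system on $\mathbb{P}^3$ in which the tritangent planes for $\xi_1,\ldots,\xi_5$ take the normal form $x_j = 0,\ x_0+x_1+x_2+x_3 = 0$, Equation~(\ref{eqn:tritangents}) expresses each tritangent plane $H_i, H_i'$ as a ratio of Jacobian nullvalues. Because every tuple involved is azygetic and essentially independent by Lemma~\ref{lem:special_characteristics}, Theorem~\ref{ThmJacobi} in genus $4$ rewrites each such Jacobian nullvalue as a sum of two products of theta constants; hence every $H_i$ and $H_i'$ is an explicit rational function of the theta constants of $C$.

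Next I would compute the right-hand side of Milne's identity. The Schottky--Jung relations express the theta constants of the Prym quartic $X$ in terms of those of $\Jac(C)$, and the Aronhold--Weber formulas then produce the twenty-eight bitangent equations of $X$. Selecting the ten bitangents $\ell_i$ corresponding under Milne's bijection (Theorem~\ref{thm:Milne}) to the pairs $(H_i, H_i')$ gives the right-hand sides in
\begin{equation*}
\varphi(H_i H_i') = \lambda_i\, \ell_i^2, \qquad i=1,\ldots,10.
\end{equation*}
Since $\dim V_{C,\eta}=7$, the ten quadrics $H_iH_i'$ satisfy a $3$-dimensional space of linear relations in $V_{C,\eta}$; imposing these relations on the right-hand sides yields a homogeneous linear system in the unknowns $\lambda_i$. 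Lemmas~\ref{lem:gen_set} and~\ref{lem:proof_example} guarantee, for a generic $C$, that this system has a one-dimensional solution space, so solving it (pure linear algebra) recovers $\varphi$ up to an overall scalar, and its one-dimensional kernel yields the quadric $Q$ vanishing on $C$.

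To recover a cubic, I would pick any right inverse $\widetilde{\psi}$ of the now-known map $\varphi$ (again a purely linear operation) and form the sextic form $\widetilde{G}$ via the composition~(\ref{eqn: defG}). Proposition~\ref{prop:cubic_squared} together with Lemma~\ref{lem: tildeG} gives $\widetilde{G} \equiv \gamma_\eta^2 \pmod{Q}$, where $\gamma_\eta$ is a cubic form cutting out $\Gamma_\eta$. Extracting a square root of $\widetilde{G}\bmod Q$ in a suitable monomial basis of $k[x_0,\ldots,x_3]/(Q)$ amounts to a single square-root extraction (per coefficient) and produces a cubic $\Gamma$ with $C = Q \cap \Gamma$. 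The main obstacle is controlling genericity throughout: the denominators in~(\ref{eqn:tritangents}) must be non-zero, the $H_iH_i'$ must generate $V_{C,\eta}$, the kernel of the linear system in the $\lambda_i$ must be one-dimensional, and the square root of $\widetilde{G}\bmod Q$ must exist. Each failure locus is the vanishing scheme of a non-zero polynomial in the theta constants (cf.\ Remark~\ref{rem:Dzer}), so each avoids a generic $C$ in characteristic $0$; spreading out the construction over $\Spec \ZZ$ then gives the analogous statement for $C$ over a field of sufficiently large positive characteristic, which is the meaning of ``$p$ large enough'' in the statement.
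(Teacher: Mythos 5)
Your proposal is correct and takes essentially the same approach as the paper: it assembles Lemma~\ref{lem:special_characteristics}, Equation~(\ref{eqn:tritangents}), Theorem~\ref{ThmJacobi}, the Schottky--Jung and Aronhold--Weber formulas, Milne's identity (Theorem~\ref{thm:Milne}), Lemmas~\ref{lem:gen_set} and~\ref{lem:proof_example}, and Lemma~\ref{lem: tildeG} into precisely the reconstruction procedure the paper records as Algorithm~\ref{alg}. The only difference is cosmetic: you spell out the spreading-out argument justifying ``$p$ large enough,'' which the paper leaves implicit.
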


\begin{rem} These formulas can be seen as an analogue of the result of Aronhold--Weber in genus 4. We similarly choose a suitable system of odd theta characteristics and describe the coefficients of the corresponding bitangents and tritangents involved in terms of the theta constants. To reconstruct the genus 3 curve, the method of Aronhold--Weber requires you to solve the linear system of equations given by \Cref{eq:aronholdweber}. The computations needed in order to reconstruct the genus 4 curve are slightly more involved, but at their core also only consist of solving linear systems of equations (inverting matrices), and elementary algebraic operations.
\end{rem}

\subsection{Discussion} \label{subsec:discussion}

\begin{rem}
When using the Schottky--Jung relations one takes a square root to compute the $\thetacha{X}{\delta}{\varepsilon}$. This leads to a sign choice that has to be made correctly. For this one can use the discussion following \cite[Theorem 3.1]{Glass} where Glass makes use of the quartic Riemann theta 
formulas to find a correct choice of signs. At this point, the assumption that the $\thetacha{C}{\delta}{\varepsilon}$ are the theta constants of a genus $4$ curve is used. Indeed, the latter implies that the Schottky modular form vanishes. By construction of this modular form, this is equivalent to requiring that the $\thetacha{X}{\delta}{\varepsilon}$ satisfy the quartic Riemann theta formulas.
\par When the theta constants are given in an exact field, then taking square roots might lead to an auxilliary field extension. This field extension only appears in intermediate steps. The final result of Algorithm \ref{alg} consists of equations defined over the same field as the theta constants.
\end{rem}

\begin{rem}
If one of the $\thetacha{C}{0 & \delta}{0 & \varepsilon}$ or $\thetacha{C}{0 & \delta}{1 & \varepsilon}$ vanish, then the Schottky--Jung relations imply that $X$ has a vanishing even theta-null. This means that the curve $X$ becomes hyperelliptic. Our method still works in this case by making the following adjustment: the canonical map $X\rightarrow \mathbb{P}^2$ is a degree two cover of a smooth conic ramified in eight points. Replace the 28 bitangent lines by the $\binom{8}{2}=28$ lines through pairs of ramification points (see also Remark \ref{rem: MilneHyp}). 
\par Notice that in this situation, the curve $C$ has a vanishing theta null and thus the canonical quadric $Q$ becomes singular. Conversely, if $C$ has a vanishing theta null, we can achieve that $\thetacha{C}{0}{0}=0$ by choosing the level structure appropriately. Thus we can arrange for the curve $X$ to be hyperelliptic.
\end{rem}
\begin{rem}\label{rem: fail}
There are three possibilities for the curve $C$ to not be generic enough:
\begin{enumerate*}[(i)]
    \item there is a division by zero in step (2),
    \item the products $H_i H_i'$ do not generate $V_{C, \eta}$, or
    \item the $\lambda_i$ are not uniquely determined by the linear system of equations in step (5). 
\end{enumerate*}
The first situation can occur; see Remark \ref{rem:Dzer}. However, the authors do not know if there exist curves exhibiting either of the other two phenomena. Using the same strategy as in the proof of Lemma \ref{lem:gen_set}, \ref{lem:proof_example}, i.e., by producing an example over a finite field, we obtained that the open locus in the moduli space given by the negations of conditions (ii) and (iii) has non-empty intersection with the following loci:
\begin{itemize}
    \item $C$ has a vanishing even theta constant, i.e., the quadric $Q$ is singular;
    \item Any degeneration of the cubic symmetroid $\Gamma_\eta$;
    \item In particular, the bielliptic locus.
\end{itemize}
\par Furthermore, if the reconstruction fails for one of the three reasons above, all hope is not lost---one can choose a different $2$-torsion point $\eta$ and try again. As there are 255 possibilities, and every $2$-torsion point will only give a closed locus (of expected codimension 1) parametrizing curves for which the construction can go wrong, we conjecture that for every non-hyperelliptic genus $4$ curve over a field of characteristic $\neq 2$ there exists a two-torsion point $\eta\in \Jac(C)[2]\setminus\{0\}$ such that our reconstruction method works. 
\end{rem}

\section{Examples} \label{sec:examples}

    Although our main result Theorem \ref{thm:main_result} requires the base field $k$ to be algebraically closed, with additional information one can recover equations over arithmetic base fields. Over finite fields our method can be used to recover equations of genus $4$ curves from the knowledge of the algebraic theta constants. These are defined over a field extension of manageable size as the degree of this extension can be bounded using the fact that extensions of finite fields have cyclic Galois group. (See \cite{CelikKulkarni} for examples of this type.)

In stark contrast, in the number field case the field of definition of the algebraic theta constants is typically a huge field extension of degree up to 
    $$
    [\Sp_{2g}(\mathbb{Z}): \Gamma(4,8)] = 2^{g^2+g(2g+1)+2g} \prod_{i=1}^g (2^{2i}-1)
    $$
    (cf., \cite[Chapter V, Theorem 4]{IgusaThetaBook}).
    It is therefore imperative to use floating point approximations to the theta constants. These can be computed analytically provided a small period matrix is known.
    
    In order to retrieve equations over a number field $K$, we need more information. If in addition, we are also given a big ($g \times 2g$) period matrix for $C$ with respect to a $K$-rational basis for $\HH^0(C, \Omega_C)$, we can recover equations for $C$ defined over $K$ as follows. First applying our theorem, we obtain approximate equations for $C$ with coefficients in $\CC$. Denote by $\widetilde{C}$ this complex curve, which is isomorphic to $C$ over $\CC$. 
    Computing the big period matrix for $\widetilde{C}$ and comparing it with that of $C$, we can determine the $\operatorname{PGL}_4$ transformation that changes coordinates on $\PP^3$ to make the equations of $\widetilde{C}$ defined over $K$. We then use LLL to recognize the resulting coefficients as elements of $K$. See Examples \ref{ex:gluing} and \ref{ex:modular} below for concrete applications of this process.

We now discuss a few examples. First we will construct an example over $\mathbb{F}_{37}$ and evaluate the formulas from \Cref{subsec:main} with the aid of \Magma{}. (As we construct this example ourselves, and we know the equations of the curve already, we do not need to perform steps (1) - (3) to find equations for the $\ell_i$ and the $H_i$.) This will on one hand serve to illustrate how the construction works in practice and on the other hand complete the proof for Lemmas \ref{lem:gen_set} and \ref{lem:proof_example}. (The \Magma{} computations can be found in the files \texttt{Finite-Field-example.m} and \texttt{construct-Finite-Field-example.m} in \cite{GitHub}.)

\begin{example}\label{ex:finite_field}
\vspace{1em}

\par  Consider the plane quartic $X\subset \mathbb{P}^2_{\mathbb{F}_{37}}$ with equation
\begin{align*}
X: t^4 &+ 14 t^3 u + 16 t^3 v + 32 t^2 u^2 + 26 t^2 u v + 18 t^2 v^2 + 29 t u^3 + 4 t u^2 v + 11 t u v^2\\
    & + 2 t v^3 + 26 u^4 + 16 u^3 v + 27 u^2 v^2 + 22 u v^3 + 11 v^4 = 0.
\end{align*}
The curve $X$ was chosen in such a way that all bitangents are rational. This was achieved using the Aronhold--Weber formulas and as a consequence, we explicitly know equations for all bitangents.
\par Next, using Recillas's trigonal construction \cite[Theorem 2.15]{Recillas} with the explicit geometric version of \cite[Theorem 1.5]{BruinSertoz} one can compute a genus 4 curve $C$ with a two-torsion point $\eta$ such that $\Jac(X)$ is isomorphic to the Prym. All the explicit properties of the curve $C$ that we claim below will be a consequence of this construction.
\par We find that the canonical embedding of the curve $C$ into $\mathbb{P}^3$ is cut out by the equations
\begin{align*}
        0 &= 11xw + yz,\\
        0 &= x^3 + 3x^2y + 9x^2z + 15x^2w + 23xy^2 + 10xyw + 12xz^2 + 6xzw + 12xw^2\\
            & \qquad + 8y^3 + y^2w + 17yw^2 + 11z^3 + 14z^2w + 7zw^2 + 28w^3 \, .
\end{align*}

We now start performing step (4). The curve 
$C$ has the following 10 products of pairs of tritangent planes $(H_i, H_i')$ differing by $\eta$. (Note that many of these are irreducible quadrics over $\F_{37}$, but factor as a product of two linear forms over $\F_{37^2}$.)
\begin{small}
\begin{align*}
H_1H_1'&= 2x^2 + 9xy + 18xz + 6xw + 8y^2 + 20yz + 5yw + 15z^2 + 23zw + 33w^2,\\    
H_2H_2'&= 21x^2 + 25xy + 20xz + 29xw + 2y^2 + 32yz + 34yw + 16z^2 + 5zw + 36w^2,\\
H_3H_3'&= 4x^2 + 13xy + 28xz + 9xw + 7y^2 + 28yz + 22yw + 19z^2 + 20zw + 33w^2,\\
H_4H_4'&= 30x^2 + 27xy + 6xz + 29xw + 28y^2 + 4yz + 16yw + 25z^2 + 4zw + 13w^2,\\
H_5H_5'&= 24x^2 + 22xy + 2xz + 32xw + 22y^2 + 4yz + 27yw + 31zw + 21w^2,\\
H_6H_6'&= 9x^2 + 16xy + 19xz + 6xw + 17y^2 + 12yz + 14yw + 23z^2 + 15zw + 32w^2, \\
H_7H_7'&= 29x^2 + 26xy + 30xz + 36xw + 22y^2 + 34yz + 11yw + 12z^2 + 35zw + 23w^2, \\
H_8H_8'&= 25x^2 + 26xy + 30xz + 24xw + 32y^2 + 16yz + 2yw + 28z^2 + 3zw + 33w^2, \\
H_9H_9'&= 8x^2 + 30xz + 17xw + 19y^2 + 31yz + 3yw + 23z^2 + 17w^2, \\
H_{10}H_{10}'&= 13x^2 + 34xy + 28xz + 14y^2 + 13yz + 22yw + 29z^2 + 16zw + 14w^2\, .
\end{align*}
\end{small}
(It is important to ensure that the odd theta characteristics corresponding to $(H_i, H_i')$ are given by the $(\chi_i, \chi_i')$ from Lemma \ref{lem:special_characteristics}.)
\par By explicit computation one checks that the 7 quadratic forms $H_3 H_3', \ldots, H_{10} H_{10}'$ are linearly independent and thus form a basis of $V_{C, \eta}$. This proves Lemma \ref{lem:gen_set}.
\par Next, the bitangent lines of $X$ corresponding to $(H_i, H_i')$ under Milne’s bijection are given by 
\[
\begin{array}{lll}
\ell_1 = 12t + 16u + 23v, & \ell_2 = 3t + 19u + 10v, & \ell_3 = t + u + v, \\
\ell_4 = 10t + 35u + 23v, & \ell_5 = 7t + 17u + 24v, & \ell_6 = 23t + 27u + 30v, \\
\ell_7 = 24t + 27u + 31v, & \ell_8 = t, & \ell_9 = 35t + 19u + v, \\
\ell_{10} = 13t + 17u + 23v \, . & & 
\end{array}
\]
To recover $\varphi$, we first write down the matrices $H^\square$ and $L$.
\begin{small}
$$
H^\square = \begin{pmatrix} 2 & 23 & 9 & 3 & 8 & 10 & 21 & 15 & 30 & 33 \\
21 & 31 & 10 & 33 & 2 & 16 & 17 & 16 & 21 & 36 \\
14 & 25 & 14 & 23 & 7 & 14 & 11 & 19 & 10 & 33 \\
30 & 32 & 3 & 33 & 28 & 2 & 8 & 25 & 2 & 13 \\
24 & 11 & 1 & 16 & 22 & 2 & 32 & 0 & 34 & 21 \\
 9 & 8 & 28 & 3 & 17 & 6 & 7 & 23 & 26 & 32 \\
29 & 13 & 15 & 18 & 22 & 17 & 24 & 12 & 36 & 23 \\
25 & 13 & 15 & 12 & 32 & 8 & 1 & 28 & 20 & 33 \\
 8 & 0 & 15 & 27 & 19 & 34 & 20 & 23 & 0 & 17 \\
13 & 17 & 14 & 0 & 14 & 25 & 11 & 29 & 8 & 14 \\
\end{pmatrix}
$$
$$
L = \begin{pmatrix} 
33 & 14 & 34 & 34 & 33 & 11\\
 9 & 3 & 23 & 28 & 10 & 26 \\
 1 & 2 & 2 & 1 & 2 & 1 \\
26 & 34 & 16 & 4 & 19 & 11 \\
12 & 16 & 3 & 30 & 2 & 21 \\
11 & 21 & 11 &26 & 29 & 12 \\
21 & 1 & 8 & 26 & 9 & 36 \\
 1 & 0 & 0 & 0 & 0 & 0 \\
 4 & 35 & 33 &28 & 1 & 1 \\
21 & 35 & 6 & 30 & 5 & 11
\end{pmatrix}
$$
\end{small}
We then compute the kernel of $H^\square$:

\begin{align*}\ker(H^\square) = \lspan(
&( 1,  0,  0,  5, 24,  1,  3, 21,  1, 18)^t,\\
&( 0,  1,  0, 20, 17, 11, 14, 22, 33, 30)^t,\\
&( 0,  0,  1, 26, 18, 34,  7, 29,  8, 25)^t).
\end{align*}
Writing $n_1,n_2,n_3$ for the above basis of $\ker(H^\square)$ we get
\begin{equation*}
    \resizebox{\textwidth}{!}{$\begin{pmatrix}
    L \diag(n_1)\\
    L \diag(n_2)\\
    L \diag(n_3)
\end{pmatrix} =
    \begin{pmatrix}
    33 & 14 & 34 & 34 & 33 & 11 & 0 & 0 & 0 & 0 & 0 & 0 & 0 & 0 & 0 & 0 & 0 & 0\\
    0 & 0 & 0 & 0 & 0 & 0 & 9 & 3 & 23 & 28 & 10 & 26 & 0 & 0 & 0 & 0 & 0 & 0\\
    0 & 0 & 0 & 0 & 0 & 0 & 0 & 0 & 0 & 0 & 0 & 0 & 1 & 2 & 2 & 1 & 2 & 1\\
    19 & 22 & 6 & 20 & 21 & 18 & 2 & 14 & 24 & 6 & 10 & 35 & 10 & 33 & 9 & 30 & 13 & 27\\
    29 & 14 & 35 & 17 & 11 & 23 & 19 & 13 & 14 & 29 & 34 & 24 & 31 & 29 & 17 & 22 & 36 & 8\\
    11 & 21 & 11 & 26 & 29 & 12 & 10 & 9 & 10 & 27 & 23 & 21 & 4 & 11 & 4 & 33 & 24 & 1\\
    26 & 3 & 24 & 4 & 27 & 34 & 35 & 14 & 1 & 31 & 15 & 23 & 36 & 7 & 19 & 34 & 26 & 30\\
    21 & 0 &  0 &  0 &  0 &  0 & 22 &  0 &  0 &  0 &  0 & 0 & 29 &  0 &  0 &  0 &  0 &  0\\
    4 & 35 & 33 & 28 &  1 &  1 & 21 & 8 & 16 & 36 & 33 & 33 & 32 & 21 &  5 &  2 &  8 &  8\\
    8 &  1 & 34 & 22 & 16 & 13 &  1 & 14 & 32 & 12 &  2 & 34 &  7 & 24 &  2 & 10 & 14 & 16
    \end{pmatrix}
    $}
\end{equation*}
One readily verifies that the left kernel of this matrix is one-dimensional and generated by $$(\lambda_1,\ldots , \lambda_{10}) = (1,21,13,32,8,33,17,3,19,18).$$
This suffices to prove Lemma \ref{lem:proof_example} and finishes step (4).

(5): The matrix $M_{\varphi}$ for
$$\varphi: V_{C, \eta}\longrightarrow S^2 \HH^0(C, \Omega_C(\eta))$$
is given by
$$M_{\varphi} = \begin{pmatrix}
18 & 22 & 30 & 24 & 3 &  2 &  8 \\
15 & 17 & 27 & 17 &  0 & 36 & 1 \\
31 & 24 & 30 & 25 &  0 & 35 & 34 \\
17 & 18 &  7 & 35 &  0 & 14 & 22 \\
16 & 16 & 32 &  5 & 0 & 19 & 16 \\
19 & 20 & 26 & 20 &  0 & 19 & 13
\end{pmatrix}$$
with respect to the basis $H_3 H_3', \ldots, H_{10} H_{10}'$ of $V_{C, \eta}$ and the standard (monomial) basis of $S^2 \HH^0(C, \Omega_C(\eta))$. 

(6): We now compute the kernel of $\varphi$ which (as expected) is generated by the quadratic form $Q$ given by $11xw+yz$
that we started with.

(7): 
Computing the kernel of $(H^\square)^t$ we get a 3-dimensional vector space. The first two vectors correspond to the matrices:
$$Q_1 = \begin{pmatrix}1 & 0 & 16 & 29\\
     0 &  0 & 14 & 35\\
    16 & 14 & 5 &  8\\
    29 & 35 & 8 & 17
\end{pmatrix}, Q_2 = \begin{pmatrix}0 & 19 & 23 & 22\\
    19 & 0 & 17 & 6\\
    23 & 17 & 9 & 18\\
    22 & 6 & 18 & 2
\end{pmatrix}$$
and we get the symetric matrix
$$Q' = (Q_1^{-1}+Q_2^{-1})^{-1}= \begin{pmatrix}16 & 17 & 3 & 10 \\
17 & 31 & 35 & 17\\
 3 & 35 & 17 & 25 \\
10 & 17 & 25 & 26 \\
\end{pmatrix}.$$

(8): We now use this to compute the matrix representation of $\psi$,

$$M_{\psi} = \begin{pmatrix}21 & 33 & 35 & 22 & 4 & 35 & 23 \\
 9 & 29 & 27 & 6 & 0 & 16 & 31 \\
35 & 35 & 26 & 17 & 35 & 29 & 26 \\
 7 & 6 & 10 & 3 1& 13 & 36 & 20 \\
28 & 32 & 17 & 17 & 6 & 0 & 35 \\
13 & 6 & 1 & 15 & 29 & 2 & 16 \\
35 & 7 & 32 & 3 & 24 & 30 & 26
\end{pmatrix}$$
(9): Finally, we reconstruct the cubic curve as
\begin{align*}\sqrt{\det(\Delta)} = &22x^3 + 29x^2y + 13x^2z + 35x^2w + 25xy^2 + 27xyz + 18xyw + 5xz^2 +\\ &26xzw + 23xw^2 + 28y^3 + 22y^2z + 22y^2w + 24yz^2 + 5yzw + 4yw^2 +\\
&20z^3 + 12z^2w +  6zw^2 + 24w^3
\end{align*}
where 
    $\Delta := \begin{pmatrix}
d_1 & d_2 & d_3\\
d_2 & d_4 & d_5\\
d_3 & d_5 & d_6
\end{pmatrix}$  for $d := M_\psi \begin{pmatrix}
    H_1 H_1' & \ldots & H_7 H_7'
\end{pmatrix}^t.$
\end{example}

We now apply \Cref{thm:main_result} to two examples of arithmetic interest, demonstrating how our formulas can be employed in practical settings. See the files \href{https://github.com/JHanselman/reconstructing-g4/blob/main/examples/paper-examples/Example-4.1.m}{\texttt{Example-Gluing.m}} and \href{https://github.com/JHanselman/reconstructing-g4/blob/main/examples/paper-examples/Example-4.2.m}{\texttt{Example-Modular.m}} in the directory \texttt{examples/paper-examples} of \cite{GitHub} for the \Magma{} code used to compute these examples.
We computed these examples using Magma V2.29-2 on a Macbook running on MacOS Tahoe 26.4.1 with an Apple M2 Chip and 16GB of RAM.

\begin{example} \label{ex:gluing}
    (Gluing)
    Consider the hyperelliptic genus 2 curve
    $$
    C_1: y^2 = 24 x^5 + 36 x^4 - 4 x^3 - 12 x^2 + 1
    $$
    which has LMFDB label \href{https://www.lmfdb.org/Genus2Curve/Q/20736/l/373248/1}{\texttt{20736.l.373248.1}}.
    From the information on the curve's homepage, we see that the geometric endomorphism algebra of $\Jac(C_1)$ is the quaternion algebra $B_{2,3}$, i.e., the unique quaternion algebra over $\QQ$ ramified at $2$ and $3$. We look for interesting genus two curves $C_2$ such that the quotient of $A=\Jac(C_1)\times \Jac(C_2)$ by a maximal isotropic subgroup of $A[2]$ is the Jacobian of a smooth genus $4$ curve $C$ (possibly up to a quadratic twist). Using the criterion of \cite[Theorem 1.2]{BruinKulkarni} one finds that, for example, the curve    
    $$
    C_2: y^2 = 3 x^5 - 68 x^4 + 159 x^3 + 232 x^2 - 132 x + 16
    $$
    has this property. Using the methods of Costa--Mascot--Sijsling--Voight \cite{Endomorphism}, we verify that $\End^{0}_{\overline{\mathbb{Q}}}(\Jac(C_2))\cong \mathbb{Q}(\sqrt{5})$.
    \par Our formulas produce the following equations for the genus 4 curve $C$: 
    \begin{align*}
        0 &= 10x^2 + 8xy - 9y^2 - 33z^2 - 30zw - 40w^2\\
        0 &= -6x^3 - 2x^2y - xy^2 + 5xz^2 - 22xzw - 5xw^2 + 3y^3 + 11yz^2\\
            &\qquad + 10yzw - 11yw^2
 \, .
        \end{align*}
    By construction we have that $\End_{\overline{\QQ}}^0(\Jac(C))\cong B_{2,3} \times \mathbb{Q}(\sqrt{5})$, which we verified using the heuristic methods in \cite{Endomorphism}. As an additional check, we verified that the L-polynomial of $C$ is equal to the product to the L-polynomials of $C_1$ and $C_2$, up to twist, for all primes of good reduction up to $1000$.
    \par A priori the curve will come out in a coordinate system where the equations are not defined over $\mathbb{Q}$. Using the knowledge of a big period matrix for $\Jac(C)$ associated to a $\mathbb{Q}$-rational basis for $\HH^0(C, \Omega_C)$, we found the $\text{PGL}_4$ transformation that changes coordinates into this $\mathbb{Q}$-rational basis.

    Computing the period matrices to precision $200$, the computation took 1.18 seconds; the time taken by the various steps is given below.


   \begin{table}[h]
    \centering
    \begin{tabular}{|l|c|}
        \hline
        \textbf{Step} & \textbf{Time (seconds)} \\ \hline
        Theta constants & 0.15 \\ \hline
        Reconstruction over $\mathbb{C}$ & 0.47 \\ \hline
        Change of coordinates & 0.56 \\ \hline
        \textbf{Total time} & \textbf{1.18} \\ \hline
    \end{tabular}
    \label{tab:gluing_computation_times}
\end{table} 
\end{example}

\begin{example} \label{ex:modular}

    (Modular Jacobians) Let $f$ be the modular form orbit with LMFDB label \href{https://www.lmfdb.org/ModularForm/GL2/Q/holomorphic/778/2/a/a/}{778.2.a.a}. We use our method to compute the abelian fourfold that corresponds to it via modularity for RM abelian varieties over $\QQ$. This abelian variety $A$ is the subvariety of $J_0(778) =\Jac(X_0(778))$ corresponding to a 4-dimensional Hecke-invariant subspace. It has RM by the field of Hecke eigenvalues which is the totally real subfield of $\mathbb{Q}(\zeta_{15})$; this is the quartic field with LMFDB label \href{https://www.lmfdb.org/NumberField/4.4.1125.1}{4.4.1125.1}.
    We begin by computing its period matrix using \Magma{}'s command \texttt{Periods}. However, this period matrix need not correspond to a principally polarized abelian variety: the pullback of the principal polarization on $J_0(778)$ is not in general principal.
    \par To remedy this, we call the command \texttt{SomePrincipalPolarizations} from the \href{https://github.com/AndrewVSutherland/ModularCurves/}{ModularCurves} GitHub repository \cite{ModularCurves}. This produces several big period matrix candidates. Evaluating the Schottky modular form on the first of these yields a complex number with absolute value $6.2124\cdot10^{-300}$ when calculated with precision 300, indicating that this principally polarized abelian variety is likely a Jacobian. Applying our method then produces the corresponding genus 4 curve, which, after a change of variable, is isomorphic to the curve $C$ in $\PP^3$ given by the following equations.
    \begin{align*}
    0 &= x^2 - xz - xw - y^2 - yw + 2z^2 + zw - 4w^2\\
    0 &= 2xyw - 2x z^2 - 12 x z w - 10 x w^2 - y^2 z - 2 y^2 w + y z w + 4 y w^2 + 2 z^3\\
    &\qquad- 20 z w^2 - 18 w^3
    \end{align*}
    As a sanity check for this heuristic example, we compute the places of bad reduction of $C$. Using the methods from \cite{ThomasInvariants}, we compute the invariants of $C$ and in particular the discriminant. The result is $2^9\cdot 113^{30}\cdot 389^4$ and therefore candidates for the primes of bad reduction are $2, 113$, and $389$. However, modulo 113 the given equations reduce to a smooth genus $4$ curve, although the canonical quadric degenerates to a cone. Thus the primes of bad reduction are $2$ and $389$ as one would expect, since the level of $f$ is $778= 2\cdot 389$.
    As a further sanity check, we have computed and compared the local $L$-factors of $f$ and $C$ for primes up to 1000 and verified that they match. A priori the Jacobian of $C$ could have been a quadratic twist of the abelian variety corresponding to $f$, but our computations also showed this is not the case.

    Computing the period matrices to precision $300$, the computation took 1.51 seconds; the time taken by the various steps is given below. 


    \begin{table}[h]
        \centering
        \begin{tabular}{|l|c|}
            \hline
            \textbf{Step} & \textbf{Time (seconds)} \\ \hline
            Theta constants & 0.19 \\ \hline
            Reconstruction over $\mathbb{C}$ & 0.51 \\ \hline
            Change of coordinates & 0.81 \\ \hline
            \textbf{Total time} & \textbf{1.51} \\ \hline
        \end{tabular}
        \label{tab:computation_times_modular}
    \end{table}    

    We thank Noam Elkies for suggesting this example, as well as Edgar Costa for his help in computing polarizations.
\end{example}

\section{Higher genus} \label{sec:future}

We will discuss the opportunities and challenges that arise in higher genera. In work in progress 
\cite{HPS2} we plan to generalize the Milne correspondence to arbitrary $g$. This would give a bijection between pairs of odd theta hyperplanes in $\mathbb{P}^{g-1}$ for $C$ and odd theta hyperplanes in $\mathbb{P}^{g-2}$ for the Prym variety $\Prym(C, \eta)$. Notice that in this context the Prym variety is no longer automatically a Jacobian. Nevertheless, one can define odd theta hyperplanes for arbitrary p.p.a.v.s; however, these will lack the geometric interpretation as multitangents of a canonically embedded curve (unless the p.p.a.v. happens to be a Jacobian).
\par This generalization would give formulas for the equations of a generic genus $5$ curve in terms of its theta constants. Indeed, for $g\geqslant 5$ the image of the canonical embedding of a generic curve of genus $g$ is cut out by quadratic equations. For $g=5$, one should be able to compute these equations with a suitable adaptation of the results in Section \ref{subsec:quad}.
\par For $g=6$ we also expect that our method works. However, in order to get a formula purely in terms of theta constants, it remains to find a formula for the Jacobian nullvalues as a rational function in the theta constants; see also Remark \ref{rem:ConjIgusa}.
\par Here we use that in the cartesian diagram
\begin{equation}
\begin{tikzcd}
    S^2 \HH^0(\Omega_C (\eta))\times_{\HH^0(\Omega_C^{\otimes 2})} S^2 \HH^0(\Omega_C)\arrow{r}\arrow[hookrightarrow]{d}\arrow{r}{} & S^2 \HH^0(\Omega_C (\eta))\arrow[hookrightarrow]{d}\\
     S^2 \HH^0(\Omega_C)\arrow{r} & \HH^0(\Omega_C^{\otimes 2})    
\end{tikzcd}
\end{equation}
the vertical maps are injective for a generic curve of genus $g=5, 6$ which was proven by Donagi \cite[Theorem 5.1]{Donagi} for $g=5$ (here one interprets the right vertical map as the codifferential of the Prym map) and Beauville \cite[Proposition 7.10]{BeauvillePrym} for $g=6$.
\par For $g\geqslant 7$ it is not longer true that 
$$S^2 \HH^0(\Omega_C(\eta))\rightarrow \HH^0(\Omega_C^{\otimes 2}) $$
is injective (for simple dimension reasons). This means that we can no longer view $V_{C, \eta}$ as a linear subspace of $S^2\HH^0(C, \Omega_C)$ because here the injectivity of the map 
$$S^2\HH^0(C, \Omega_C(\eta))\rightarrow \HH^0(C, \Omega_C^{\otimes 2})$$
in (\ref{eqn:phi_diagram}) was crucially used. Nonetheless, one could still use odd theta data to produce a system of equations satisfied by the analogues of the unknown constants $\lambda_i$ from Equation (\ref{eqn:Milne10}), but these equations will be quadratic instead of linear. Therefore another idea is needed here. We plan to revisit this question in the future.
\begin{rem}
Lehavi's method \cite{Lehavi} generalizes to genus $5$ (see \cite{Lehavi5}) but not to genus $6$ because the fact that $m_{\eta}$ is an isomorphism causes problems. This is not a restriction for the method of the present paper.
\par In the recent preprint \cite{CelikLehavi}, Lehavi's technique is pushed to genus $6$ and $7$; their approach is based on equations of degree $4$.
\end{rem}

\appendix
\bibliographystyle{alphaurl}
\bibliography{bibl.bib}
\end{document}